\newtheorem{remark}{Remark}
\newtheorem{theorem}{Theorem}
\newtheorem*{theorem*}{Theorem}
\newtheorem{proposition}{Proposition}
\newtheorem*{proposition*}{Proposition}
\newtheorem{lemma}{Lemma}
\newtheorem{corollary}{Corollary}
\newtheorem{definition}{Definition}
\def\eqref#1{equation~\ref{#1}}
\def\1{\bm{1}}
\DeclareMathAlphabet{\mathsfit}{\encodingdefault}{\sfdefault}{m}{sl}
\SetMathAlphabet{\mathsfit}{bold}{\encodingdefault}{\sfdefault}{bx}{n}
\title{Extending Mercer's expansion to indefinite and asymmetric kernels}
\author{Sungwoo Jeong \\
Department of Mathematics \\
Cornell University \\
%Ithaca, NY 14853 \\
\texttt{sjeong@cornell.edu} \\
\And
Alex Townsend\\
Department of Mathematics \\
Cornell University \\
%Ithaca, NY 14853 \\
\texttt{townsend@cornell.edu}
}
\author{Sungwoo Jeong and Alex Townsend}
\address{Department of Mathematics, Cornell University, Ithaca, New York}
\email{sjeong@cornell.edu}
\email{townsend@cornell.edu}
\begin{document}

\maketitle % FOR ICLR

\begin{abstract}
Mercer's expansion and Mercer's theorem are cornerstone results in kernel theory. While the classical Mercer's theorem only considers continuous symmetric positive definite kernels, analogous expansions are effective in practice for indefinite and asymmetric kernels. In this paper we extend Mercer's expansion to continuous kernels, providing a rigorous theoretical underpinning for indefinite and asymmetric kernels. We begin by demonstrating that Mercer's expansion may not be pointwise convergent for continuous indefinite kernels, before proving that the expansion of continuous kernels with bounded variation uniformly in each variable separably converges pointwise almost everywhere, almost uniformly, and unconditionally almost everywhere. We also describe an algorithm for computing Mercer's expansion for general kernels and give new decay bounds on its terms.
\end{abstract}

\maketitle % FOR non-ICLR

\section{Introduction}

Before the singular value decomposition (SVD) of a matrix, several pioneers of modern functional analysis in the early 20th century---\citet{hammerstein1923entwickelung}, \citet{schmidt1907theorie}, \citet{smithies1938eigen}, and \citet{weyl1912asymptotische}---figured out the properties of the eigenfunction expansion and the singular value expansion (SVE) for a general square-integrable kernel~\citep{stewart1993early}. Within these developments, Mercer, in 1909, showed that any continuous, symmetric positive definite kernel $K:[a,b]\times [a,b]\rightarrow \mathbb{R}$ with $-\infty<a<b<\infty$ can be expressed as an absolutely and uniformly convergent sum of orthonormal eigenfunctions multiplied by their corresponding eigenvalues, i.e.,  
\begin{equation} 
K(x,y) = \sum_{n=1}^\infty \lambda_n u_n(x) u_n(y), \quad (x,y)\in [a,b]\times [a,b].
\label{eq:Mercer}
\end{equation} 
The expansion in~\cref{eq:Mercer} is typically called a Mercer's expansion of $K$ \citep{mercer1909xvi}.

Mercer's expansion is an important result in functional analysis. It plays a crucial role in many machine learning applications because it underpins the theory of kernel methods~\citep{scholkopf2018learning,kung2014kernel}, allowing researchers to develop theoretical foundations for Gaussian processes~\citep{kanagawa2018gaussian}, support vector machines (SVM)~\citep{cortes1995support,steinwart2008support}, Transformer models \citep{vaswani2017,wright2021transformers}, and many other applications \citep{ghojogh2021reproducing}.

Several researchers have been interested in using Mercer-like expansions for continuous kernels that are symmetric but not positive definite (indefinite kernels) as well as general continuous kernels that are not symmetric (asymmetric kernels). There are at least three reasons why indefinite and asymmetric kernels are useful: (1) Checking whether a kernel is positive definite can be computationally expensive \citep{luss2007support,huang2017indefinite}, (2) Often one finds that Mercer-like expansions continue to hold for indefinite and asymmetric kernels, even though the theoretical underpinnings have been lacking, and (3) Indefinite and asymmetric kernels, such as the sigmoid ($\tanh$) kernel, naturally arise in applications such as Transformers \citep{wright2021transformers,chen2024primal}, kernel learning theory \citep{haasdonk2002tangent,ong2004learning,he2023learning}, $H^\infty$ control theory \citep{hassibi1999indefinite}, protein sequence similarity \citep{saigo2004protein,vert2004local}, and many others.

Surprisingly, a continuous symmetric kernel may not have a Mercer-like expansion that converges pointwise (see~\Cref{prop:contcounterex}). We hope this new example clarifies some confusion in the literature on Mercer's expansion for general kernels. We also prove that general continuous kernels may also have Mercer's expansions that converge pointwise, but not uniformly and absolutely, meaning that Mercer's expansion must be used with some care for general kernels. 

This work provides a rigorous theoretical underpinning of Mercer's expansion for indefinite and asymmetric kernels. For indefinite kernels, the eigenfunction expansion is the most natural generalization of Mercer's expansion. For asymmetric kernels, the most natural generalization is the following singular value expansion (SVE) that was considered in the early 1900s~\citep{schmidt1907theorie}
\begin{equation}
    K(x, y) = \sum_{n=1}^\infty \sigma_n u_n(x) v_n(y),
    \label{eq:MercersExpansion}
\end{equation}
where $\sigma_1\geq \sigma_2\geq \cdots > 0$ are the singular values of $K$ and $\left\{u_n\right\}_{n\in\mathbb{N}}$ and $\left\{v_n\right\}_{n\in\mathbb{N}}$ are sets of orthonormal functions called the right and left singular functions of $K$, respectively. The expansion in~\cref{eq:MercersExpansion} can replace Mercer's expansion for asymmetric kernels (see~\Cref{sec:extensions}). We regard~\cref{eq:MercersExpansion} as a natural generalization of Mercer's expansion to general continuous kernels. 

Schmidt proved the existence of the expansion in~\cref{eq:MercersExpansion} for all square-integrable kernels and showed that the equality holds in the $L^2$ sense~\citep{schmidt1907theorie}. He also proved that for continuous kernels $K$, if $\sigma_n>0$, the corresponding left and right singular functions can be selected as continuous. Later, in 1923, Hammerstein showed that the expansion in~\cref{eq:MercersExpansion} converges uniformly for any continuous kernel that also satisfies a contrived square-integrated Lipschitz-like condition~\citep{hammerstein1923entwickelung}. Later, in 1938, Smithies gave a finicky condition on continuous kernel that ensures specific decay rates of the singular values, and he used that decay to prove almost everywhere pointwise and almost everywhere absolute convergence~\citep{smithies1938eigen}. More recently, it was realized that both Hammerstein's and Smithies' conditions are satisfied by continuous kernels that are Lipschitz continuous uniformly in each variable separably~\citep{townsend2014computing}. 

We prove results for a larger class of general continuous kernels $K:[a,b]\times [c,d]\rightarrow \mathbb{R}$ than Lipschitz continuous kernels. We only need the kernels to have bounded variation uniformly in each variable separately, i.e., for any fixed $x\in[a,b]$ and any fixed $y\in[c,d]$, we have 
\begin{equation} 
\int_a^b \left|\frac{\partial}{\partial x}K(x,y)\right|dx <V, \qquad \int_c^d \left|\frac{\partial}{\partial y}K(x,y)\right|dy <V,
\label{eq:boundedVariation}
\end{equation} 
for some fixed uniform constant $V<\infty$. We say a kernel is of uniform bounded variation if it satisfies the conditions in~\cref{eq:boundedVariation}. The partial derivatives in~\cref{eq:boundedVariation} are replaced by the notion of weak derivatives if the kernel is not differentiable~\citep[Chap.~7]{trefethen2019approximation}. We also provide new decay bounds on $\sigma_n$ for smoother kernels (see~\Cref{sec:singularvaluedecay}), which we believe are asymptotically tight, and can be used to rigorously truncate Mercer's expansion. 

We prove a new result that continuous kernels of uniform bounded variation have a Mercer's expansion that converges pointwise almost everywhere, unconditionally almost everywhere, and almost uniformly. We have not been able to prove that the expansion converges pointwise, absolutely, and uniformly, so there could still be some subtleties in using Mercer's expansion for general kernels. However, we believe that continuous kernels of uniform bounded variation have a Mercer's expansion that converges pointwise, absolutely, and uniformly, and we hope later research will prove this. 

\subsection{Our contributions}
We generalize Mercer's expansion to a general continuous kernel, where the kernel may be indefinite or asymmetric. The following two theorems summarize our main theoretical results about Mercer's expansion for general continuous kernels (see~\Cref{sec:extensions}).

\begin{theorem*}
For any $[a, b]\subset \mathbb{R}$ there are continuous symmetric indefinite kernels on $[a,b]\times [a,b]$ such that Mercer's expansion in~\cref{eq:MercersExpansion}: (i) does not converge pointwise, (ii) converges pointwise but not uniformly, and (iii) converges pointwise but not absolutely. 
\end{theorem*}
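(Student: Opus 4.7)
The plan is to reduce the three counterexamples to classical pathologies of Fourier series via translation-invariant kernels. Without loss of generality I take $[a,b]=[0,2\pi]$; any other interval follows from an affine change of variables $\phi:[a,b]\to[0,2\pi]$ and the kernel $\tilde K(x,y)=K(\phi(x),\phi(y))$, which preserves continuity, symmetry, indefiniteness, and all three convergence properties. For a continuous, even, $2\pi$-periodic function $f:\mathbb{R}\to\mathbb{R}$, set $K(x,y)=f(x-y)$; then $K$ is continuous and symmetric on $[0,2\pi]^2$, and is indefinite as soon as the Fourier coefficients $\hat f(n)$ take both signs. A short calculation using the $2\pi$-periodicity of $f$ gives $\int_0^{2\pi} K(x,y)\,e^{iny}\,dy = 2\pi\hat f(n)\,e^{inx}$, so the orthonormal family $u_n(x)=e^{inx}/\sqrt{2\pi}$ diagonalises the integral operator of $K$ with eigenvalues $\lambda_n = 2\pi\hat f(n)$, and Mercer's expansion collapses to
\begin{equation*}
K(x,y) \;=\; \sum_{n\in\mathbb{Z}} \hat f(n)\, e^{in(x-y)},
\end{equation*}
i.e., the Fourier series of $f$ evaluated at $z=x-y$. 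Pointwise, uniform, and absolute convergence of the Mercer expansion at $(x,y)$ are therefore equivalent to the corresponding properties of the Fourier series of $f$ at $z=x-y$; in particular, since $|e^{in(x-y)}|=1$, absolute convergence at any one point is the same as $\sum_n|\hat f(n)|<\infty$.

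With this reduction, each claim follows from a classical Fourier-series pathology. For (i), take $f$ to be a symmetrised du Bois-Reymond/Fej\'er construction: a continuous, even, $2\pi$-periodic function whose Fourier series diverges at some $z_0\in(-2\pi,2\pi)$. The Mercer expansion of the corresponding $K$ then diverges at every $(x,y)$ with $x-y=z_0$. For (iii), exploit that the Wiener algebra is a proper subalgebra of $C(\mathbb{T})$: by the sharpness of Bernstein's theorem there exists a H\"older-$\tfrac12$ function $f\in C(\mathbb{T})\setminus A(\mathbb{T})$. H\"older regularity gives uniform (hence pointwise) convergence of its Fourier series by the Dini-Lipschitz test, while $f\notin A(\mathbb{T})$ forces $\sum_n|\hat f(n)|=\infty$, so Mercer's expansion converges pointwise everywhere but absolutely nowhere. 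For (ii), I would use a continuous $f$ whose Fourier series converges at every point but not uniformly, constructed as a lacunary sum of Fej\'er-like bumps $g_k$ placed at points $x_k$ clustering to some $x_\infty$, with frequency supports separated widely enough that at each fixed $x$ only finitely many bumps contribute nontrivially to the tail, while the Lebesgue-constant growth of the partial sums associated to the $g_k$ forces $\|S_N f - f\|_\infty \not\to 0$.

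The main technical obstacle is (ii): the textbook Fourier-series counterexamples target either pointwise divergence (du Bois-Reymond) or non-absolute but still uniform convergence (Bernstein-type), whereas (ii) requires the delicate intermediate regime of pointwise-everywhere-but-non-uniform convergence, and the lacunary construction has to be balanced carefully so that the bumps neither spoil pointwise convergence at individual points nor collapse the supremum to zero. Once this $f$ is in hand, the remaining details are routine: each of the three $f$'s can be arranged to have cosine coefficients of both signs so that $K$ is indefinite, and the affine change of variables transfers the construction to an arbitrary $[a,b]$ without changing any convergence property of the expansion.
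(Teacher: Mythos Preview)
Your part~(i) is essentially the paper's own argument (Proposition~1): both diagonalise a translation-invariant kernel $K(x,y)=f(x-y)$ by the trigonometric system and invoke a du~Bois-Reymond/Fej\'er-type continuous $f$ with divergent Fourier series. For part~(iii) your route is correct but genuinely different. You reduce to the classical fact that $C^{1/2}(\mathbb{T})\setminus A(\mathbb{T})\neq\varnothing$ (sharpness of Bernstein's absolute-convergence theorem) together with the Dini--Lipschitz test for uniform, hence pointwise, convergence; the paper instead builds $K_{\text{abs}}(x,y)=\sum_n\frac{(-1)^n}{n^2}\tilde{P}_{2n}(x)\tilde{P}_{2n}(y)$ from normalized Legendre polynomials and must then work fairly hard---Bernstein's inequality for $\tilde{P}_n$, the Legendre generating function, and the product formula $P_n(x)P_n(y)=\frac{1}{\pi}\int_0^\pi P_n\bigl(xy+\cos\theta\sqrt{(1-x^2)(1-y^2)}\bigr)\,d\theta$---merely to verify continuity at the corners $(\pm1,\pm1)$. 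Your argument is shorter; the paper's is fully explicit and yields a kernel with $\sigma_n\asymp n^{-2}$, which ties into its later decay discussion. Incidentally, indefiniteness is automatic in your framework and need not be ``arranged'': a continuous translation-invariant kernel with $\hat f(n)\ge0$ is positive definite, and classical Mercer would then force absolute and uniform convergence, contradicting each of (i)--(iii).

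Part~(ii) is where you have a real gap. You need a continuous even $2\pi$-periodic $f$ whose Fourier series converges at \emph{every} point yet not uniformly, and your gliding-hump sketch does not establish this. The difficulty is exactly the one you flag: du~Bois-Reymond--type constructions produce pointwise \emph{divergence} somewhere, while any $f$ satisfying Dini--Lipschitz (hence any H\"older $f$) or any continuous $f$ of bounded variation already has \emph{uniformly} convergent Fourier series. So the $f$ you seek must have modulus of continuity no better than $c/\log(1/\delta)$ and infinite variation, and building one whose partial sums nevertheless converge at every point is delicate---it is not a one-line citation. The paper sidesteps this entirely by abandoning translation invariance for (ii): it places rescaled pairs $\tilde{U}_{2m},\tilde{U}_{2m+2}$ of weighted second-kind Chebyshev polynomials on disjoint intervals $I_m$ of length $\asymp m^{-2}$ accumulating at $x=1$, so that for any fixed $(x,y)$ the expansion has at most two nonzero terms (pointwise convergence is then trivial), continuity at the corner $(1,1)$ follows from the cancellation $\|\tilde{U}_{2m}-\tilde{U}_{2m+2}\|_\infty=O(1)$, and non-uniformity follows because $\|\tilde{U}_{2m}\|_\infty\gtrsim\sqrt{m}$ keeps the sup-norm of each single term bounded below by a fixed constant. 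If you want to keep your unified Fourier framework you must either carry the hump construction through in detail or supply a precise reference for the required $f$; otherwise the paper's orthogonal-polynomial construction is the argument that actually closes~(ii).
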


\begin{theorem*}
For any $[a, b], [c, d]\subset \mathbb{R}$, let $K:[a,b]\times [c,d]\to \mathbb{R}$ be a continuous kernel of uniform bounded variation (see~\cref{eq:boundedVariation}). Then, Mercer's expansion of $K$ in~\cref{eq:MercersExpansion} converges pointwise, unconditionally almost everywhere, and almost uniformly. 
\end{theorem*}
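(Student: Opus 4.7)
The plan is to establish three ingredients and then combine them. First, I would obtain a polynomial decay rate on the singular values $\sigma_n$ by constructing an explicit low-rank approximation to $K$ and invoking the minimax characterization of singular values. The natural construction is a piecewise-constant sample of $K$ on an $N\times N$ equispaced grid of $[a,b]\times[c,d]$; telescoping the pointwise error using the uniform bounded variation hypothesis (\ref{eq:boundedVariation}) in each variable separately yields $\|K-K_N\|_{L^2}$ decaying polynomially in $N$, hence so does $\sigma_n$. This is analogous to (and can draw from) the sharper decay estimates the paper derives for smoother kernels.

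Second, I would obtain a uniform pointwise bound $|S_N(x,y)|\le C$ on the partial sums $S_N(x,y)=\sum_{n=1}^N \sigma_n u_n(x) v_n(y)$, independent of $N$ and of $(x,y)$. Observe that for fixed $x$, the map $y\mapsto S_N(x,y)$ is the $L^2([c,d])$-orthogonal projection of the bounded variation function $K(x,\cdot)$ onto $\mathrm{span}(v_1,\dots,v_N)$, so the desired bound reduces to controlling the $L^\infty$ norm of this projection. An integration-by-parts argument against the reproducing kernel of this subspace---exploiting that the $v_n$ are right singular functions of a BV kernel rather than an arbitrary orthonormal system---should then deliver the uniform bound, and by symmetry one obtains the analogous bound in the $x$ variable.

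Third, combining these with Schmidt's $L^2$ convergence of the SVE yields the three convergence statements. Pointwise almost everywhere convergence follows from $L^2$ convergence plus uniform boundedness via dominated convergence along a subsequence, and continuity of $K$ pins down the limit at every point. Almost uniform convergence is then immediate from Egorov's theorem applied to the finite measure space $[a,b]\times[c,d]$. For unconditional almost everywhere convergence---which for a real-valued series is absolute convergence---apply Cauchy--Schwarz in the weighted form
\[
\sum_n\sigma_n|u_n(x) v_n(y)| \le \left(\sum_n\sigma_n^{2\alpha} u_n(x)^2\right)^{1/2}\left(\sum_n\sigma_n^{2(1-\alpha)} v_n(y)^2\right)^{1/2},
\]
and use Tonelli, namely $\int_a^b\sum_n\sigma_n^{2\alpha} u_n(x)^2\,dx = \sum_n\sigma_n^{2\alpha}$, to conclude the right-hand side is finite almost everywhere provided $\sigma_n=O(n^{-\beta})$ with $\beta>1$, by choosing $\alpha\in(1/(2\beta),\,1-1/(2\beta))$.

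I expect the main obstacle to be the second step---obtaining the uniform pointwise bound on $S_N$---because for a generic orthonormal system such partial-sum bounds fail (Fourier series of continuous functions can already diverge at a point), so the argument must genuinely use that $\{v_n\}$ comes from the SVE of a BV kernel. A secondary subtlety is that the natural singular value decay for uniform BV kernels hovers around $\sigma_n=O(n^{-1})$, which is borderline for the absolute-convergence argument; squeezing out a rate $\beta>1$ may require using both BV conditions in (\ref{eq:boundedVariation}) simultaneously, rather than exploiting each variable in isolation.
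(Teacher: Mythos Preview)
Your plan has two genuine gaps, both of which you half-anticipate.

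\textbf{The singular value decay is borderline and your absolute convergence argument fails at that border.} For a continuous kernel of uniform bounded variation the paper obtains exactly $\sigma_n = \mathcal{O}(n^{-1})$ (Proposition~\ref{prop:singularvaluebound}), and the authors state they believe this rate is asymptotically tight. Your piecewise-constant approximation will not do better: it gives $\|K-K_N\|_{L^2} = \mathcal{O}(N^{-1/2})$ in each variable, hence $\sigma_n = \mathcal{O}(n^{-1})$ at best, and using both BV conditions simultaneously does not improve the exponent because the rank of the tensor-grid approximation is still $\mathcal{O}(N^2)$. Thus $\beta = 1$ is all you get, and your Cauchy--Schwarz/Tonelli argument requires $\beta > 1$; it does not go through. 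Also note that ``unconditional almost everywhere convergence'' of an orthonormal series means that \emph{every} rearrangement converges a.e., with the null set allowed to depend on the rearrangement; this is strictly weaker than absolute convergence a.e., so your equivalence claim is incorrect (though absolute would certainly suffice). The paper resolves this by invoking a Rademacher--Menchov-type lemma (Lemma~\ref{lem:ONSunconditional}, from Kashin--Saakyan): if $\sum_n |a_n|^{2-\epsilon} < \infty$ for some $\epsilon>0$, then $\sum_n a_n \phi_n$ converges unconditionally a.e.\ for \emph{any} orthonormal system $\{\phi_n\}$. A H\"older argument then shows that $a_n = \sigma_n u_n(x)$ satisfies this for almost every $x$ as soon as $\sigma_n = \mathcal{O}(n^{-\alpha})$ with $\alpha > 1/2$, comfortably covering $\alpha = 1$.

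\textbf{Your Step~2 is both unnecessary and unjustified.} The sketch---integration by parts against the reproducing kernel $\sum_{n\le N} v_n(y)v_n(z)$ of the span---does not obviously exploit anything special about the SVE singular functions; that kernel is just an orthogonal projection and has no reason to be well-behaved under BV integration by parts. You are right that for a generic orthonormal system such partial-sum bounds fail, and you offer no concrete mechanism that distinguishes the SVE case. The paper sidesteps this entirely: once unconditional a.e.\ convergence is established, pointwise a.e.\ convergence is immediate (no uniform partial-sum bound needed), and almost uniform convergence then follows from Egorov, exactly as you say. So the architecture the paper uses is: decay rate $\Rightarrow$ Rademacher--Menchov-type lemma $\Rightarrow$ unconditional a.e.\ $\Rightarrow$ pointwise a.e.\ $\Rightarrow$ Egorov $\Rightarrow$ almost uniform. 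Your Step~2 and your subsequential dominated-convergence argument for pointwise convergence are both red herrings.
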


The first theorem shows us that Mercer's expansion for continuous kernel must be treated with significant care. However, our result resolve some of these subtleties for continuous kernels of uniform bounded variation. Bounded variation is a weak extra smoothness condition on a continuous kernel, e.g., any uniform Lipschitz continuous kernel (in each variable separably) is also of uniform bounded variation. In addition to these two main theorems, we prove new decay bounds on the singular values $\sigma_n$ for smoother kernels (see~\Cref{sec:singularvaluedecay}). We also describe an efficient algorithm to compute Mercer's expansion in~\cref{eq:MercersExpansion} for general continuous kernels (see~\Cref{sec:numerical}). 

\section{Background: Mercer's Theorem for General Kernels}

In this section, we provide some background material on Mercer's theorem and review some of the literature's efforts to extend Mercer's expansion to general kernels. 

\subsection{Mercer's Theorem for continuous symmetric positive definite kernels}
Given a continuous symmetric positive definite kernel $K$, Mercer's theorem ensures the pointwise, uniform, and absolute convergence of a Mercer's expansion of $K$ \citep{mercer1909xvi}.

\begin{theorem}[Mercer's theorem]
For any $[a,b]\subset\mathbb{R}$, let $K:[a,b]\times [a,b]\to\mathbb{R}$ be a continuous symmetric function. Suppose $K$ is positive definite, i.e., $\sum_{i,j=1}^Nc_ic_jK(x_i, x_j)\geq 0$ for any $c_1, \dots, c_N\in\mathbb{R}$ and $x_1, \dots, x_N\in[a,b]$. Then, there is a set of continuous orthonormal functions $\{u_n\}_{n\in\mathbb{N}}$ on $[a,b]$ and corresponding positive real numbers $\lambda_1\geq \lambda_2\geq \cdots >0$ such that 
\begin{equation*}
    \int_a^b K(x, y)u_n(y) dy = \lambda_n u_n(x), \quad x\in[a,b].
    \end{equation*} 
Moreover, $K$ has a series expansion given by 
\begin{equation}\label{eq:mercerseries}
    K(x, y) = \sum_{n=1}^\infty \lambda_n u_n(x)u_n(y),\quad x, y\in[a,b],
\end{equation}
where the series converges pointwise, uniformly, and absolutely to $K$. 
\label{thm:Mercer}
\end{theorem}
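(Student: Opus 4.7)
The plan is to build the expansion from the spectral theorem for compact self-adjoint operators and then promote $L^2$-convergence to pointwise, absolute, and uniform convergence using the positive definiteness of the kernel.

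First, I would introduce the integral operator $T_K:L^2([a,b])\to L^2([a,b])$ defined by $(T_Kf)(x)=\int_a^b K(x,y)f(y)\,dy$. Because $K$ is continuous on a compact rectangle it is bounded and in particular Hilbert--Schmidt, so $T_K$ is compact; symmetry of $K$ makes $T_K$ self-adjoint. The spectral theorem for compact self-adjoint operators then yields a countable orthonormal system $\{u_n\}_{n\in\mathbb{N}}$ of eigenfunctions with real eigenvalues $\lambda_n\to 0$, and positive definiteness of $K$ translates immediately into $\langle T_Kf,f\rangle\geq 0$ for all $f\in L^2$ (approximate the integral by Riemann sums and use the discrete PD hypothesis), so every $\lambda_n\geq 0$; after discarding zero eigenvalues we order them $\lambda_1\geq\lambda_2\geq\cdots>0$. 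Each $u_n$ can be chosen continuous since $\lambda_n u_n(x)=\int_a^b K(x,y)u_n(y)\,dy$ and the right-hand side is continuous in $x$ by uniform continuity of $K$ and dominated convergence.

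Next I would establish $L^2$-convergence of the series $\sum_n\lambda_n u_n(x)u_n(y)$ to $K(x,y)$ on $[a,b]^2$. Define the remainder kernel
\begin{equation*}
K_N(x,y)=K(x,y)-\sum_{n=1}^N\lambda_n u_n(x)u_n(y).
\end{equation*}
Standard Hilbert--Schmidt reasoning, together with completeness of the eigenfunctions on $(\ker T_K)^\perp$ (the orthogonal complement on which $T_K=0$ contributes nothing to the kernel because $K$ is continuous and PD, so $K(x,x)=\sum\lambda_n u_n(x)^2$ will hold pointwise in the end), shows $\|K_N\|_{L^2([a,b]^2)}\to 0$. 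The key structural observation is that $K_N$ is itself a continuous symmetric positive definite kernel: it is continuous (finite sum of continuous terms subtracted from a continuous function), symmetric by construction, and positive definite because it is the kernel of the non-negative operator $T_K-\sum_{n\leq N}\lambda_n\langle\cdot,u_n\rangle u_n$.

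The decisive step is then to use the diagonal inequality for PD kernels: any continuous PD kernel $L$ satisfies $L(x,y)^2\leq L(x,x)L(y,y)$ by Cauchy--Schwarz on the $2\times 2$ Gram matrix. Applied to $K_N$, this gives $|K_N(x,y)|\leq K_N(x,x)^{1/2}K_N(y,y)^{1/2}$, so it suffices to prove $K_N(x,x)\to 0$ uniformly in $x$. Since $K_N(x,x)=K(x,x)-\sum_{n=1}^N\lambda_n u_n(x)^2$ is a decreasing sequence (in $N$) of nonnegative continuous functions, and since we already know $\int_a^b K_N(x,x)\,dx=\|K_N\|_{L^1(\mathrm{diag})}\to 0$ (or more simply that the partial sums $\sum\lambda_n u_n(x)^2$ are monotone increasing and bounded pointwise by $K(x,x)$, and converge in $L^1$), Dini's theorem yields uniform convergence of $K_N(x,x)$ to a continuous limit, which must be $0$. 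This simultaneously gives absolute convergence: for any $M\geq N$,
\begin{equation*}
\sum_{n=N}^M\lambda_n|u_n(x)u_n(y)|\leq\Bigl(\sum_{n=N}^M\lambda_n u_n(x)^2\Bigr)^{1/2}\Bigl(\sum_{n=N}^M\lambda_n u_n(y)^2\Bigr)^{1/2}\leq K_N(x,x)^{1/2}K_N(y,y)^{1/2},
\end{equation*}
which tends to $0$ uniformly in $(x,y)$ by the previous step, proving uniform and absolute convergence.

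The main obstacle, and the step that makes Mercer's theorem genuinely subtle, is the uniform convergence $K_N(x,x)\to 0$. The $L^2$-convergence of $K_N$ on the whole square does not immediately give any pointwise information, and without the positive-definite structure one cannot invoke Dini's theorem. The whole argument hinges on the fact that positive definiteness is preserved when subtracting finitely many leading eigenterms, and that this in turn allows one to reduce the two-variable uniform bound to the one-variable monotone-diagonal statement. This explains why the indefinite and asymmetric extensions discussed in the paper require genuinely new tools.
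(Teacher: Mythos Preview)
The paper does not supply its own proof of this theorem; Mercer's theorem is quoted as classical background (with a citation to Mercer, 1909) to set the stage for the paper's contributions on indefinite and asymmetric kernels. So there is nothing in the paper to compare against, and your proposal has to stand on its own merits.

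Your outline follows the standard route and is mostly sound, but there is a genuine gap at the Dini step. Dini's theorem takes as a \emph{hypothesis} that a monotone sequence of continuous functions on a compact set converges pointwise to a \emph{continuous} limit; it then concludes uniform convergence. It cannot be used to deduce continuity of the limit. You have $K_N(x,x)\ge 0$ monotone decreasing, hence pointwise convergent to some $h(x)\ge 0$, but nothing in your argument establishes $h\equiv 0$ or even that $h$ is continuous. The $L^1$ convergence on the diagonal that you invoke (which you do not prove, and which is itself essentially the trace identity $\sum_n\lambda_n=\int_a^b K(x,x)\,dx$) would at best give $h=0$ almost everywhere, which is not sufficient for Dini. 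Your parenthetical remark that ``$K(x,x)=\sum\lambda_n u_n(x)^2$ will hold pointwise in the end'' is precisely the missing ingredient, and invoking Dini to obtain it is circular.

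The classical repair is to establish pointwise equality of the series with $K$ \emph{before} Dini. From $\sum_{n}\lambda_n u_n(x)^2\le K(x,x)$ you already know the diagonal series converges at every $x$. Then for each fixed $x_0$, Cauchy--Schwarz gives
\[
\sum_{n=N}^{M}\lambda_n\,|u_n(x_0)u_n(y)|\le\Bigl(\sum_{n=N}^{M}\lambda_n u_n(x_0)^2\Bigr)^{1/2}\Bigl(\sup_{y\in[a,b]}K(y,y)\Bigr)^{1/2},
\]
so the series converges absolutely and uniformly in $y$ for that fixed $x_0$. Its sum $g(x_0,\cdot)$ is therefore continuous in $y$, and one checks that $K(x_0,\cdot)-g(x_0,\cdot)$ is orthogonal both to every $u_m$ and to every $\phi\in\ker T_K$, hence is zero in $L^2$ and, being continuous, identically zero. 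This yields $K(x,x)=\sum_n\lambda_n u_n(x)^2$ at every point, i.e.\ $h\equiv 0$, and only then does Dini legitimately upgrade the diagonal convergence to uniform, after which your final Cauchy--Schwarz step completes the proof.
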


\par Mercer's theorem is a simple but powerful result showing us that Mercer's expansion for continuous symmetric positive definite kernels can be used without any subtleties from an analysis perspective. In this setting, by spectral theory, $\lambda_1,\lambda_2,\ldots,$ are the positive eigenvalues of $K$ with corresponding orthonormal eigenfunctions $\left\{u_n\right\}_{n\in\mathbb{N}}$.

The convergence properties of Mercer's expansion are useful in integral operator theory \citep{stewart2011fredholm}. For instance, if a Mercer's expansion of $K:[a,b]\times [a,b]\rightarrow\mathbb{R}$  converges uniformly, one can interchange integration and the summation, i.e.,
\begin{equation*}
    \int_a^b K(x, y)f(y)dy = \int_a^b \sum_{n=1}^\infty \sigma_n u_n(x)v_n(y) f(y) dy = \sum_{n=1}^\infty \sigma_n u_n(x) \int_a^b v_n(y)f(y) dy.
\end{equation*}
In particular, the integrals $\int v_n(y)f(y)dy$ do not depend on $x$, so $\int_a^b K(x, y)f(y)dy$ can be computed efficiently using this formula. Moreover, the integral equation $g(x) = \int K(x,y) f(y)dy$, with $f$ unknown, can be easily solved by back substitution using Mercer's expansion~\citep{schmidt1907theorie}.

When Mercer's expansion converges uniformly, its truncation error is always bounded in the uniform norm. This means that one can safely truncate the series and use finite rank approximations of the kernel. On the other hand, if Mercer's expansion does not converge uniformly, the uniform norm error between $K$ and a truncation may not converge to zero as the rank increases (see~\Cref{sec:absoluteuniformexample}).

Finally, when Mercer's expansion converges absolutely (or unconditionally), one can write the kernel in factored form, i.e., $K = U\Sigma V^\top$. Writing a function in a decomposition format like this has several advantages, as linear algebra algorithms immediately extend to the continuous setting (see \citep{townsend2015continuous}). Our result in \Cref{sec:mainresult} implies a similar but weaker conclusion. 

\subsection{Previous work on indefinite and asymmetric kernels}
There are several attempts to establish a theoretical framework for Mercer's expansions of indefinite and asymmetric kernels. One of the most popular attempts is the theory of Reproducing Kernel Kre{\u\i}n Space (RKKS) \citep{alpay2001schur,ong2004learning,loosli2015learning,huang2017indefinite} for indefinite kernels. Here, a kernel $K$ needs to have a so-called positive decomposition $K(x, y) = K_+(x, y) - K_-(x, y)$, where $K_+$ and $K_-$ are continuous symmetric positive definite kernels (which is also required for the generalized Mercer's theorem in \citep{chen2008generalized}). Another attempt is the Reproducing Kernel Banach Space (RKBS), which removes the inner-product requirement from an RKHS. It can be defined in several ways \citep{zhang2009reproducing,georgiev2013construction,xu2019generalized,lin2022reproducing}, for example, the RKBS requires a semi-inner product in \citep{zhang2009reproducing} and a $L^p$-norm in \citep{georgiev2013construction}. 

Unfortunately, not all continuous indefinite kernels induce an RKKS, and not all continuous kernels induce an RKBS. Characterizing what properties a kernel must have to induce an RKKS or RKBS remains an open problem. For example, the continuous indefinite kernel $K_\text{abs}$ in \cref{eq:kernel_a} does not have a positive decomposition, so it does not define an RKKS. Likewise, there are continuous kernels for which Mercer's expansion does not converge pointwise (see~\Cref{prop:contcounterex}) and hence does not satisfy the conditions in \citep{xu2019generalized} to generate an associated RKBS.

Aside from reproducing kernel approaches, researchers have tried to generate feature spaces induced by indefinite kernels. Using pseudo-Euclidean spaces, \citep{haasdonk2005feature} shows that SVMs are a sort of distance minimizer in such a space, although it is not a metric space. Other studies handle indefinite kernel learning by modifying problems to continuous symmetric positive definite kernel learning either by transforming the kernels themselves \citep{wu2005analysis} or by finding a nearby positive definite kernel \citep{luss2007support}. In fact, such modification may not be possible for some kernels (see, for example, proofs of Propositions~\ref{prop:Kabsolute}, \ref{prop:Kuniform}) thus careful rigorous analysis has to be entailed. We hope the theoretical underpinnings detailed in this paper can aid future attempts in these directions. 

\section{Mercer's expansion for general continuous kernels}\label{sec:extensions}
We detail Mercer's expansion for continuous kernels and investigate its convergence behavior. 

\subsection{Mercer's expansion}
Consider a general continuous kernel $K:[a,b]\times [c,d]\rightarrow\mathbb{R}$, where $[a, b], [c, d]\subset\mathbb{R}$. If $K$ is symmetric, then the interval $[c,d]$ will equal $[a,b]$. As hinted by the SVD for matrices, for a general kernel $K$, we will need to consider two sets of functions: (1) Right singular functions denoted by $u_1,u_2,\ldots$, which are orthonormal with respect to $L^2([a,b])$ and (2) Left singular functions denoted by $v_1,v_2,\ldots$, which are orthonormal with respect to $L^2([c,d])$. The functions are defined to satisfy
\begin{equation}
    \sigma_n u_n(x) = \int_c^d K(x, y)v_n(y) dy, \hspace{1cm} \sigma_n v_n(y) = \int_a^b K(x, y) u_n(x) dx.
    \label{eq:IntegralDefinition}
\end{equation}
The values $\sigma_1\geq \sigma_2\geq\cdots>0$ are called the positive singular values of $K$. The SVE of $K$, which we refer to as Mercer's expansion, is given in~\cref{eq:MercersExpansion}.\footnote{If $K$ is continuous, then the equality at least holds in the $L^2$ sense.}

The SVE of $K$ is not completely unique. However, the convergence properties are the same for any SVE of $K$. One can modify the left and right singular functions with signs and potentially reindex terms when singular values plateau. Moreover, the integral definition in~\cref{eq:IntegralDefinition} allows the left and right singular functions to be modified on sets of measure zero. For continuous kernels, if $\sigma_n>0$, we select $u_n$ and $v_n$ to be continuous, which \citet{schmidt1907theorie} showed is always possible. With this convention, the SVE of $K$ is equivalent to Mercer's expansion for continuous symmetric positive definite kernels. 

\subsection{Mercer's Expansion May Not Converge Pointiwse}\label{sec:examples}

Given a continuous symmetric but indefinite kernel $K : [a,b]\times [a,b]\rightarrow \mathbb{R}$, a Mercer's expansion in~\cref{eq:MercersExpansion} may not converge pointwise. 

\begin{proposition}\label{prop:contcounterex}
For any $[a,b]\subset\mathbb{R}$, there exists a continuous symmetric indefinite kernel $K_{\text{pt}}:[a, b]\times [a, b]\rightarrow \mathbb{R}$ with a Mercer's expansion that does not converge pointwise. 
\end{proposition}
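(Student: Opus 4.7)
The plan is to construct $K_{\text{pt}}$ as a convolution (Toeplitz-type) kernel on $[a,b]^2$. Setting $L = b-a$, I would take
\[ K_{\text{pt}}(x,y) = g(x - y), \qquad x, y \in [a,b], \]
where $g : \mathbb{R} \to \mathbb{R}$ is a continuous, even, $L$-periodic function to be chosen below. Continuity of $K_{\text{pt}}$ is equivalent to continuity of $g$, symmetry of $K_{\text{pt}}$ follows from $g$ being even, and indefiniteness will be arranged by ensuring $g$ has Fourier coefficients of both signs.

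The first step is to identify the eigenstructure. Because the kernel is a convolution with an $L$-periodic function, the integral operator on $L^2([a,b])$ is diagonalized by the real orthonormal Fourier basis $\{1/\sqrt{L}\} \cup \{\sqrt{2/L}\cos(2\pi n(x-a)/L),\ \sqrt{2/L}\sin(2\pi n(x-a)/L) : n \geq 1\}$. A direct calculation (substitute $y \mapsto x - z$ and use $L$-periodicity) shows that, for each $n \geq 1$, both the cosine and sine eigenfunction have eigenvalue $L\widehat{g}_n$, while the constant has eigenvalue $L\widehat{g}_0$, where $\widehat{g}_n$ is the $n$-th Fourier coefficient of $g$ (real and even in $n$ since $g$ is real and even).

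The second step evaluates Mercer's partial sum at a diagonal point $(x_0, x_0)$. Using $\cos^2\theta + \sin^2\theta = 1$, summing through the two-dimensional eigenspace at frequency $n$ contributes exactly $2\widehat{g}_n$ to the value at $(x_0, x_0)$, independent of $x_0$. Hence after including all eigenpairs up through frequency $N$, the Mercer partial sum equals the symmetric Dirichlet partial sum $\sum_{|m| \leq N} \widehat{g}_m$ of the Fourier series of $g$ at the origin. It remains to choose $g$ so that this partial sum fails to converge: a classical du~Bois-Reymond-type construction provides a continuous $g = \sum_{k \geq 1} c_k P_k$, where each $P_k$ is a trigonometric polynomial with Fourier support in a frequency band $B_k \subset \mathbb{Z}$ disjoint across $k$, with $\|P_k\|_\infty \leq 1$, and whose Fourier partial sum at $0$ grows like $\log|B_k|$. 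Choosing $|c_k|$ summable makes the series defining $g$ uniformly convergent, hence $g$ continuous; choosing $|c_k|\log|B_k|$ unbounded makes the Dirichlet partial sums of $g$ at $0$ unbounded.

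The main obstacle is that Mercer's ordering is by decreasing eigenvalue magnitude $|\widehat{g}_m|$, not by frequency, so the classical du~Bois-Reymond divergence (stated for frequency-ordered partial sums) is not immediately transferred. I would address this by choosing the block magnitudes $|c_k|$ strictly decreasing and well-separated enough that all Fourier coefficients of $g$ in block $k$ dominate those in block $k+1$; then Mercer's magnitude ordering is forced to exhaust each block before passing to the next, so the Mercer partial sums sampled at block boundaries coincide with Dirichlet partial sums at the right edges of the blocks, which are unbounded by construction. This implies Mercer's expansion of $K_{\text{pt}}$ at $(x_0, x_0)$ does not converge. Indefiniteness of $K_{\text{pt}}$ is automatic: the blocks $P_k$ contain Fourier coefficients of both signs, so $K_{\text{pt}}$ has eigenvalues of both signs and cannot be positive definite.
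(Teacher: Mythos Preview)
Your overall strategy---taking $K_{\text{pt}}(x,y)=g(x-y)$ for a continuous even periodic $g$ whose Fourier series diverges at the origin---is exactly the paper's. Your eigenstructure computation is correct (both $\cos$ and $\sin$ at frequency $n$ are eigenfunctions with eigenvalue $L\widehat g_n$), and you rightly flag the potential mismatch between the magnitude ordering required by the SVE and the frequency ordering of Fourier partial sums; the paper simply writes the eigenfunction expansion in frequency order and calls that ``a Mercer's expansion,'' so it never confronts this point.

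Your resolution of the ordering issue, however, contains a real gap. In any du~Bois-Reymond construction $g=\sum_k c_k P_k$ with $\|P_k\|_\infty\le 1$ and $\sum|c_k|<\infty$, the Dirichlet partial sum of $g$ at $0$ taken at the \emph{right edge} of block $k$ equals $\sum_{j\le k}c_jP_j(0)$, which is bounded by $\sum_j|c_j|<\infty$. The blow-up $|c_k|\log|B_k|\to\infty$ occurs only for Dirichlet sums truncated \emph{inside} a block---typically at its midpoint, where only the ``positive half'' of the block has been included. Your magnitude-separation hypothesis forces the Mercer ordering to exhaust each block before starting the next, so Mercer partial sums at block boundaries match Dirichlet sums at block \emph{edges}, which are bounded, not at block middles. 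Worse, in the standard block $P_k$ the positive and negative Fourier coefficients come in pairs of \emph{equal} magnitude (e.g.\ $\pm c_k/(2j)$ at frequencies $N_k\mp j$), so magnitude ordering interleaves them and the Mercer partial sums oscillate only by $O(c_k)\to 0$ within each block; the magnitude-ordered expansion then \emph{converges} at $(x_0,x_0)$. Separating these $\pm$ magnitudes enough to place all positive coefficients of a block before all negative ones forces perturbations so large that $\|P_k\|_\infty$ can no longer be kept bounded, so the fix is not straightforward.
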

\begin{proof}
    We prove this by providing an example and, without loss of generality, we assume that $[a,b]=[-1,1]$. Consider a continuous even $2\pi$-periodic function $f:[-\pi,\pi]\rightarrow\mathbb{R}$ whose Fourier series does not converge pointwise at $t=0$. Such a function exists, for example, $f(t) = \sum_{n=1}^\infty \sin(2^{n^3}+1)t/2)/n^2$~\citep{fejer1911singularites}. Let $f(t) = \sum_{n=-\infty}^\infty c_n e^{int} = c_0 + \sum_{n=1}^\infty 2c_n \cos(nt)$ be the Fourier expansion of $f(t)$, i.e., $c_n = \frac{1}{2\pi}\int_{-\pi}^\pi f(t) e^{int}dt$. %Since $f$ is an even function we have $c_n = c_{-n}$. 
    Now, consider $K_{\text{pt}}(x, y) := f(x-y)$. Since $f$ is $2\pi$-periodic we find that for fixed $y$, we have $\int_{-\pi}^\pi K_{\text{pt}}(x, y) e^{inx}dx = 2\pi c_n e^{iny}$ for $n\in\mathbb{Z}$. We also have $\int_{-\pi}^\pi K_{\text{pt}}(x, y)\cos(nx) dx = 2\pi c_n\cos(ny)$ and $\int_{-\pi}^\pi K_{\text{pt}}(x, y)\sin(nx)dx = 0$ by Euler's formula. Since $\{\cos{nx}\}_{n\in\mathbb{N}}\cup\{\sin{nx}\}_{n\in\mathbb{N}}$ form a complete orthogonal basis of $L^2([-\pi,\pi])$, we have the full set of eigenfunctions of $K_{\text{pt}}$. Thus, 
    \begin{equation*}
        K_{\text{pt}}(x, y) = c_0 + \sum_{n=1}^\infty 2\pi c_n \frac{1}{\sqrt\pi}\cos (nx) \frac{1}{\sqrt\pi}\cos(ny) = c_0 + \sum_{n=1}^\infty 2 c_n \cos(nx)\cos(ny),
    \end{equation*}
    where equality holds in the $L^2$ sense. To conclude that the series does not converge pointwise, consider the expansion at $y=0$. We find that $K_{\text{pt}}(x, 0) = c_0 + \sum_{n=1}^\infty 2c_n \cos(nx) = f(x)$. Since this is the Fourier series of $f$, we know that this series does not converge pointwise at $0$. We conclude that we have a Mercer's expansion for $K_{\text{pt}}$ that is not pointwise convergent at $(0,0)$. This example can be transplanted to have a domain $[a,b]\times [a,b]$ by a linear translation for any $[a,b]\subset\mathbb{R}$. 
\end{proof}

Since a general kernel may be indefinite, the example in the proof of~\Cref{prop:contcounterex} also shows that Mercer's expansion may not converge pointwise for general kernels. 

\subsection{Mercer's Expansion May Not Converge Absolutely or Uniformly}\label{sec:absoluteuniformexample}
For a continuous kernel, Mercer's expansion might converge pointwise but not absolutely or might converge pointwise but not uniformly. One must treat Mercer's expansion carefully for indefinite or asymmetric continuous kernels because its convergence properties can be subtle. 

\begin{proposition}\label{prop:Kabsolute}
For any $[a, b]\subset \mathbb{R}$ there exists a continuous symmetric indefinite kernel $K_{\text{abs}}:[a,b]\times [a,b]\rightarrow \mathbb{R}$ with a Mercer's expansion that converges pointwise but not absolutely.   
\end{proposition}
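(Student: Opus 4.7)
The plan is to follow the convolution-kernel construction in the proof of \Cref{prop:contcounterex}. I would set $K_{\text{abs}}(x,y)=f(x-y)$ on $[-\pi,\pi]\times[-\pi,\pi]$ for an appropriate continuous, even, $2\pi$-periodic function $f$ with Fourier series $f(t)=c_0+\sum_{n\geq 1}2c_n\cos(nt)$, and transport to any $[a,b]$ by linear rescaling. Repeating the eigenfunction computation from the earlier proof (including the $\sin(nx)/\sqrt{\pi}$ eigenfunctions, which share the eigenvalue $2\pi c_n$ with $\cos(nx)/\sqrt{\pi}$), the Mercer expansion collapses to
\begin{equation*}
  K_{\text{abs}}(x,y) \;=\; c_0+\sum_{n=1}^\infty 2c_n\cos\!\bigl(n(x-y)\bigr),
\end{equation*}
so pointwise convergence at $(x_0,y_0)$ is equivalent to pointwise convergence of the Fourier series of $f$ at $t=x_0-y_0$, while the absolute series at $(0,0)$ reduces to $|c_0|+2\sum_{n\geq 1}|c_n|$.

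It therefore suffices to produce a continuous, even, $2\pi$-periodic $f$ whose Fourier series converges pointwise at every $t$ but satisfies $\sum|c_n|=\infty$, and whose coefficients $c_n$ have varying signs (so that the eigenvalues $2\pi c_n$ are not all of one sign and $K_{\text{abs}}$ is indefinite). The cleanest route is Salem--Zygmund: take
\begin{equation*}
  f(t)\;=\;\sum_{n=2}^\infty \frac{\epsilon_n\cos(nt)}{n\sqrt{\log n}},\qquad \epsilon_n\in\{\pm 1\}\ \text{i.i.d.}
\end{equation*}
With $a_n=1/(n\sqrt{\log n})$, the hypothesis $\sum a_n^2\log n=\sum n^{-2}<\infty$ of the Salem--Zygmund theorem holds, yielding almost sure uniform convergence; hence $f$ is almost surely continuous and its Fourier series converges uniformly (in particular pointwise everywhere). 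At the same time $\sum a_n=\sum 1/(n\sqrt{\log n})=\infty$ by the integral test, and almost surely the $\epsilon_n$ take both signs. Any realization of $\{\epsilon_n\}$ with all three properties (a full-probability event) furnishes the required $f$.

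Given such $f$, continuity and symmetry of $K_{\text{abs}}$ are immediate, indefiniteness follows from the mixed signs of $c_n$, pointwise convergence of the Mercer expansion at every $(x,y)$ is inherited from pointwise convergence of the Fourier series of $f$, and the failure of absolute convergence at $(0,0)$ is witnessed by $\sum|c_n|=\infty$. The main obstacle is the construction of $f$: by Bernstein's theorem any $C^\alpha$ function with $\alpha>1/2$ has $\sum|c_n|<\infty$, so $f$ must have low H\"older regularity, and at such low regularity pointwise convergence of the Fourier series is not automatic (Kahane--Katznelson shows it can fail on measure-zero sets even for continuous $f$). The Salem--Zygmund argument handles both issues simultaneously, which is why I would favor it over explicit constructions (e.g., sums of Rudin--Shapiro polynomials placed in disjoint frequency bands with amplitudes $k^{-2}2^{-k/2}$), which also work but are more tedious to verify.
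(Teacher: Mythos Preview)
Your proposal is correct and takes a genuinely different route from the paper. The paper builds an explicit deterministic example with Legendre polynomials, $K_{\text{abs}}(x,y)=\sum_{n\ge 1}\frac{(-1)^n}{n^2}\tilde P_{2n}(x)\tilde P_{2n}(y)$, and then spends most of the effort proving continuity of the kernel at the four corners $(\pm1,\pm1)$ via the Legendre generating function and the product formula $P_{2n}(x)P_{2n}(y)=\tfrac{1}{\pi}\int_0^\pi P_{2n}(xy+\cos\theta\sqrt{(1-x^2)(1-y^2)})\,d\theta$; non-absolute convergence is then read off at $(1,1)$. Your convolution-kernel route with a Salem--Zygmund random cosine series is much shorter because uniform convergence of the Fourier series delivers continuity of $f$ (hence of $K$) for free, and the Mercer partial sums reduce to Fourier partial sums of $f$. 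Two small points worth tightening: first, you correctly note that $\sin(nx)/\sqrt{\pi}$ is also an eigenfunction with eigenvalue $2\pi c_n$ (the paper's earlier claim that this integral vanishes is in fact mistaken), so each frequency contributes a two-dimensional eigenspace; second, because of this multiplicity the \emph{odd} SVE partial sums include a dangling term $2c_{k}\cos(kx)\cos(ky)$ (or the $\sin$ analogue) not yet paired with its partner, so pointwise convergence of the Mercer expansion needs the extra observation that individual terms tend to zero, which is immediate since $|c_n|\to 0$. The trade-off is explicitness: the paper's kernel is written down in closed form, whereas yours is an almost-sure existence statement; both establish the proposition.
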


%Similarly, we have an indefinite kernel for which a Mercer's expansion converges pointwise but not uniformly. Proofs for both propositions are given in~\Cref{sec:proofexamples}.

\begin{proposition}\label{prop:Kuniform}
For any $[a, b]\subset \mathbb{R}$ there exists a continuous symmetric indefinite kernel $K_{\text{uni}}:[a,b]\times [a,b]\rightarrow \mathbb{R}$ with a Mercer's expansion that converges pointwise but not uniformly. 
\end{proposition}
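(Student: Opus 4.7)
My plan is to mirror the convolution-kernel construction from the proof of \Cref{prop:contcounterex}. Without loss of generality take $[a,b]=[-\pi,\pi]$ and set $K_{\text{uni}}(x,y):=f(x-y)$ for a continuous, even, $2\pi$-periodic $f:\mathbb{R}\to\mathbb{R}$ whose Fourier series $c_0+\sum_{n\geq 1}2c_n\cos(nt)$ converges to $f(t)$ pointwise at every $t\in[-\pi,\pi]$ but fails to converge uniformly on $[-\pi,\pi]$. The extension to a general interval then follows by linear translation as in \Cref{prop:contcounterex}, and indefiniteness of $K_{\text{uni}}$ amounts to arranging at least one negative Fourier coefficient $c_n$ of $f$: if the chosen $f$ happens to satisfy $c_n\geq 0$ for all $n$, I replace it by $f-C\cos(x)$ for a large enough $C$, a single-coefficient perturbation that preserves continuity, evenness, and all asymptotic convergence behavior of the Fourier series.

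The reduction to a Fourier problem then proceeds exactly as in \Cref{prop:contcounterex}. The eigenfunctions of the integral operator with kernel $K_{\text{uni}}$ on $L^2([-\pi,\pi])$ are $\{1/\sqrt{2\pi}\}\cup\{\cos(nx)/\sqrt{\pi},\sin(nx)/\sqrt{\pi}\}_{n\geq 1}$, with a common eigenvalue $2\pi c_n$ shared by the $n$-th cosine/sine pair. Pairing the two eigenfunctions sharing each eigenvalue and applying the product-to-sum identity, the $N$-th partial sum of Mercer's expansion collapses to
\[
c_0+\sum_{n=1}^N 2c_n\cos\bigl(n(x-y)\bigr)=S_Nf(x-y),
\]
the $N$-th Fourier partial sum of $f$ evaluated at $x-y$. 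Hence pointwise (resp.\ uniform) convergence of Mercer's expansion on $[-\pi,\pi]^2$ is equivalent to pointwise (resp.\ uniform) convergence of the Fourier series of $f$ on $[-\pi,\pi]$, and the proposition reduces to producing such an $f$.

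The real work --- and the main obstacle --- is exhibiting a continuous, even, $2\pi$-periodic function whose Fourier series converges pointwise everywhere but not uniformly. Such examples are classical but not as self-contained as Fejer's pointwise-divergent example invoked in \Cref{prop:contcounterex}; see, e.g., the constructions discussed in Chapter VIII of Zygmund's \emph{Trigonometric Series}. One concrete route I would follow is to construct $f$ so that its modulus of continuity just barely violates the Dini--Lipschitz condition $\omega_f(\delta)\log(1/\delta)\to 0$, for instance by superposing even trigonometric polynomials with Fourier supports in widely separated dyadic bands and summable amplitudes, with phases chosen so that the Fourier partial sums $S_Nf$ exhibit Gibbs-type overshoots of non-vanishing sup-norm amplitude at spatial locations that migrate as $N$ grows. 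Failure of uniform convergence is then read off from the persistent envelope of these overshoots, while pointwise convergence at each fixed $x$ must be verified by showing that, at any given truncation level, only $O(1)$ of the blocks contribute a non-negligible error and their aggregate pointwise error collapses (via Dirichlet's test applied to each individual block). The delicate step --- and where I expect the bulk of the technical work to go --- is the quantitative calibration of block amplitudes, Fourier supports, and overshoot locations that simultaneously forces continuity of $f$, pointwise convergence of $S_Nf$ at every $x$, and non-decay of $\|S_Nf-f\|_\infty$ along a subsequence of $N$.
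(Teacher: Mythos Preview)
Your convolution-kernel reduction is a different route from the paper's, and the reduction itself is sound: with $K(x,y)=f(x-y)$ for even $2\pi$-periodic $f$, both $\cos(nx)/\sqrt{\pi}$ and $\sin(nx)/\sqrt{\pi}$ are eigenfunctions with eigenvalue $2\pi c_n$, and pairing them collapses the partial sum to $S_Nf(x-y)$. (Note, incidentally, that the paper's proof of \Cref{prop:contcounterex} drops the sine eigenfunctions; your inclusion of them is the correct picture.) But two real issues remain.

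First, a subtlety you pass over: the paper's Mercer expansion in \cref{eq:MercersExpansion} is ordered by decreasing $\sigma_n=2\pi|c_n|$, so your identity ``$N$-th Mercer partial sum $=S_Nf(x-y)$'' only holds if $|c_1|\geq|c_2|\geq\cdots$. Block constructions in separated dyadic frequency bands typically do not produce monotone $|c_n|$, so the Mercer-ordered partial sums would be a specific rearrangement of the Fourier series, and both pointwise convergence everywhere and failure of uniform convergence would have to be re-argued for that rearrangement. This is not fatal, but it has to be addressed---either by engineering $|c_n|$ to be monotone (which is in tension with the block construction you sketch) or by proving the relevant properties directly for the rearranged series.

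Second, and more substantively, you do not actually construct $f$: you cite Zygmund and describe a strategy whose ``delicate step'' you leave open. That step is the whole proof. The paper avoids all of this by taking a completely different, self-contained route: it builds orthonormal functions $v_n$ supported on disjoint shrinking intervals $I_m\subset[-1,1]$ (two functions per interval, built from rescaled weighted Chebyshev polynomials $\tilde U_{2m}$, $\tilde U_{2m+2}$), and sets $K_{\text{uni}}(x,y)=\sum_n \frac{(-1)^n}{\lceil n/2\rceil^3} v_n(x)v_n(y)$. Pointwise convergence is then trivial---at any fixed $(x,y)$ at most two terms are nonzero---while continuity at the single accumulation corner $(1,1)$ comes from an explicit cancellation estimate $\|\tilde U_{2m}-\tilde U_{2m+2}\|_\infty\leq\sqrt{8/\pi}$, and non-uniform convergence follows because each individual term has sup-norm bounded below by a fixed constant (via $\|\tilde U_n\|_\infty\gtrsim\sqrt{n}$). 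No Fourier theory, no external references, and the singular values $\sigma_n=\lceil n/2\rceil^{-3}$ are manifestly monotone. Your approach, if completed, would be more conceptual; the paper's is far more elementary and fully explicit.
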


We prove these two propositions in \Cref{sec:proofexamples} by finding kernels that possess Mercer's expansion with the desired properties. If a kernel has only a finite number of negative (positive) eigenvalues, one can subtract a finite rank function from $K$ to make it positive (negative) definite. This allows one to use the classic Mercer's theorem for continuous positive definite kernels, ensuring that Mercer's expansion converges pointwise, absolutely, and uniformly. Thus, the kernels we search for must have an infinite number of both positive and negative eigenvalues.

For example, as a proof of~\Cref{prop:Kabsolute}, we show that the following continuous kernel $K_{\text{abs}}:[-1, 1]\times [-1, 1]\to \mathbb{R}$ has a Mercer's expansion that converges pointwise but not absolutely:  
\begin{equation}\label{eq:kernel_a}
    K_{\text{abs}}(x, y) = \sum_{n=1}^\infty \frac{(-1)^n}{n^2}\tilde{P}_{2n}(x)\tilde{P}_{2n}(y),
\end{equation}
where $\tilde{P}_n$ is the degree $n$ normalized Legendre polynomial (in the $L^2$ sense), i.e., $\tilde{P}_n(x) = \sqrt{\frac{2n+1}{2}}P_n(x)$. The alternating signs in $(-1)^n$ in~\cref{eq:kernel_a} are essential to create a kernel with infinitely many positive and infinitely many negative eigenvalues. See \Cref{sec:proofexamples} for details. 

\subsection{On the convergence of Mercer's expansion}\label{sec:mainresult}
We now prove new convergence results on Mercer's expansion of continuous kernels of uniform bounded variation (see~\cref{eq:boundedVariation} for definition). Intuitively, a continuous function with bounded variation does not have infinitely many oscillations with non-negligible variation. Almost all continuous functions that appear in practice are of uniform bounded variation. In particular, we believe that all the continuous kernels used in machine learning are of uniform bounded variation. Therefore, our extra smoothness requirement on continuous kernels is a technical assumption needed for the analysis but not a restrictive condition in practice. 

We also remark that while Mercer's theorem holds for any compact domain, we only consider closed subsets of $\mathbb{R}$ in this work. However, in any case, \Cref{prop:contcounterex,prop:Kabsolute,prop:Kuniform} still serve as counterexamples for Mercer's theorem for general kernels on general domains. Mercer's theorem is special since the series has small amount of cancellation, thus the convergence does not rely on properties of orthonormal functions. However for general domains cancellation of orthonormal series has to be carefully examined, which is a relatively less-known subject except for closed subsets of $\mathbb{R}$. 

\subsubsection{Unconditionally almost everywhere convergence}
\par We first prove a decay rate on the singular values. We use a recent result in \citep{Wang2023} to obtain a bound of the coefficients in a Legendre expansion and then use Eckart--Young--Mirsky theorem for the SVE~\citep{schmidt1907theorie,eckart1936approximation,mirsky1960symmetric} to conclude~\Cref{prop:singularvaluebound}.
\begin{proposition}\label{prop:singularvaluebound}
For any $[a, b], [c, d]\subset\mathbb{R}$, a continuous kernel $K:[a,b]\times [c,d]\rightarrow\mathbb{R}$ of uniform bounded variation (see~\cref{eq:boundedVariation}) has $\sigma_n = \mathcal{O}(n^{-1})$ as $n\rightarrow\infty$. 
\end{proposition}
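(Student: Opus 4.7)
By an affine change of variables, we may assume without loss of generality that $K:[-1,1]\times[-1,1]\to\mathbb{R}$, since this transformation preserves bounded variation (up to a constant) and only rescales the singular values by a uniform factor. The plan is to produce a sequence of explicit finite-rank approximations of $K$ whose Hilbert--Schmidt error decays like $\mathcal{O}(m^{-1})$, and then extract the pointwise decay rate on $\sigma_n$ from the Eckart--Young--Mirsky theorem together with monotonicity of the singular values.

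First, I would expand $K$ in a double orthonormal Legendre series,
\begin{equation*}
K(x,y)=\sum_{i,j\geq 0} a_{ij}\,\tilde P_i(x)\tilde P_j(y),\qquad a_{ij}=\int_{-1}^1\!\int_{-1}^1 K(x,y)\tilde P_i(x)\tilde P_j(y)\,dx\,dy,
\end{equation*}
which converges in $L^2([-1,1]^2)$ by the completeness of $\{\tilde P_i\otimes \tilde P_j\}$. For each fixed $y$, the slice $K(\cdot,y)$ is a continuous function of bounded variation with total variation at most $V$, so by the Legendre-coefficient bound from \citet{Wang2023}, the coefficients $b_i(y):=\int_{-1}^1 K(x,y)\tilde P_i(x)\,dx$ of this slice satisfy $|b_i(y)|\leq CV/i$ with a constant $C$ independent of $y$. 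Integrating $|b_i(y)|^2$ in $y$ and invoking Parseval in the $y$-variable gives $\sum_{j} a_{ij}^2=\int_{-1}^1 |b_i(y)|^2\,dy\leq 2C^2V^2/i^2$, and symmetrically $\sum_{i} a_{ij}^2\leq 2C^2V^2/j^2$ from the BV condition in the other variable.

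Next, I would truncate the Legendre expansion to total degree $m$ in each variable,
\begin{equation*}
K_m(x,y)=\sum_{i\leq m,\,j\leq m} a_{ij}\,\tilde P_i(x)\tilde P_j(y),
\end{equation*}
which has rank at most $m+1$. By Parseval applied in both variables the squared Hilbert--Schmidt error is $\|K-K_m\|_{HS}^2=\sum_{i>m\text{ or }j>m}a_{ij}^2$, and splitting this into the contributions with $i>m$ and with $j>m$ and invoking the two coefficient bounds above yields
\begin{equation*}
\|K-K_m\|_{HS}^2 \leq \sum_{i>m}\sum_{j}a_{ij}^2+\sum_{j>m}\sum_{i}a_{ij}^2\leq 4C^2V^2\sum_{i>m}\frac{1}{i^2}=\mathcal{O}\!\left(\frac{1}{m}\right).
\end{equation*}
By the Eckart--Young--Mirsky theorem for the SVE \citep{schmidt1907theorie,mirsky1960symmetric}, the best rank-$m$ Hilbert--Schmidt approximation has squared error $\sum_{k>m}\sigma_k^2$, so $\sum_{k>m}\sigma_k^2\leq C'/m$.

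Finally, I would use the monotonicity $\sigma_1\geq \sigma_2\geq\cdots$ to upgrade this tail bound into a pointwise rate: for even $n$, $(n/2)\sigma_n^2\leq \sum_{k=n/2+1}^{n}\sigma_k^2\leq C'/(n/2)$, which gives $\sigma_n^2\leq 4C'/n^2$, i.e. $\sigma_n=\mathcal{O}(n^{-1})$. The main obstacle is step three: extracting a clean, uniform-in-$y$ Legendre coefficient bound for BV slices, which is precisely what Wang's recent result furnishes; without such sharp orthogonal-polynomial decay one would only recover the weaker rate $\sigma_n=\mathcal{O}(n^{-1/2})$ coming from a Jackson-type $L^2$ polynomial approximation bound.
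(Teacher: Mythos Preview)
Your proof is correct and follows essentially the same route as the paper's: approximate $K$ by a truncated Legendre expansion, invoke Wang's bound on Legendre approximation of BV slices, apply Eckart--Young--Mirsky, and finish with the monotonicity trick $k\sigma_{2k}^2\le\sum_{n>k}\sigma_n^2$. The only differences are cosmetic: the paper truncates in one variable only (writing $K_k^{\rm leg}(x,y)=\sum_{n<k}a_n(y)\tilde P_n(x)$, which already has rank $\le k$) and quotes Wang's $L^2$ tail bound $\|K(\cdot,y)-K_k^{\rm leg}(\cdot,y)\|_{L^2}=\mathcal{O}(k^{-1/2})$ directly, whereas you truncate in both variables and pass through the pointwise coefficient estimate $|b_i(y)|\le CV/i$; both lead to the same Hilbert--Schmidt error $\mathcal{O}(m^{-1/2})$.
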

A proof and discussion of \Cref{prop:singularvaluebound} is given in~\Cref{sec:singularvaluedecay}.~\Cref{prop:singularvaluebound} tells us that the extra smoothness assumption on $K$ leads to a faster decay of the singular values. For a general continuous kernel $K:[a,b]\times [c,d]\rightarrow\mathbb{R}$, we can conclude that it is square-integrable so that 
$\sum_{n=1}^\infty \sigma_n^2 <\infty$.
This only allows us to conclude that $\sigma_n=\mathcal{O}(n^{-1/2})$, not $\sigma_n = \mathcal{O}(n^{-1})$. The extra decay of the singular values allows us to show that Mercer's expansion convergence unconditionally almost everywhere. We prove a more general result (see~\Cref{sec:mainresultproof} for the proof). 

\begin{theorem}[Unconditional Convergence of Mercer's expansion]\label{thm:unconditional}
For any $[a,b],[c,d]\subset\mathbb{R}$ and a continuous kernel $K:[a,b]\times [c,d]\rightarrow\mathbb{R}$ with singular values satisfying $\sigma_n \leq Cn^{-\alpha}$ for some $\alpha>\frac{1}{2}$ and some $C > 0$, a Mercer's expansion of $K$ converges unconditionally almost everywhere.
\end{theorem}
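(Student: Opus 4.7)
The plan is to view Mercer's expansion as an orthogonal series in $L^2([a,b]\times[c,d])$ and follow a dyadic-block argument in the spirit of Menshov--Rademacher. Since $\{u_n\}$ and $\{v_n\}$ are $L^2$-orthonormal on their respective intervals, the product system $\phi_n(x,y) := u_n(x)v_n(y)$ is orthonormal in $L^2([a,b]\times[c,d])$, and Schmidt's theorem identifies the $L^2$-limit of $\sum_n\sigma_n\phi_n$ with $K$. The hypothesis $\sigma_n \leq Cn^{-\alpha}$ with $\alpha > 1/2$ immediately yields the Menshov--Rademacher summability $\sum_n \sigma_n^2(\log n)^2 \leq C^2\sum_n n^{-2\alpha}(\log n)^2 < \infty$.

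Concretely, let $I_k := \{n : 2^k \leq n < 2^{k+1}\}$ and $P_k := \sum_{n \in I_k}\sigma_n\phi_n$. Orthonormality gives $\|P_k\|_{L^2}^2 = \sum_{n \in I_k}\sigma_n^2 \leq C^2\cdot 2^{(1-2\alpha)k}$, which decays geometrically in $k$ since $\alpha > 1/2$, so $\sum_k\|P_k\|_{L^2} < \infty$. Cauchy--Schwarz on the compact domain gives $\|P_k\|_{L^1} \leq \sqrt{(b-a)(d-c)}\,\|P_k\|_{L^2}$, hence $\sum_k\|P_k\|_{L^1} < \infty$, and Tonelli forces $\sum_k|P_k(x,y)| < \infty$ on a full-measure set $E$, establishing absolute convergence of the dyadic-blocked series on $E$. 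Applying the Menshov--Rademacher maximal inequality within each block then controls the subset-maximal function $M_k(x,y) := \sup_{S \subseteq I_k}|\sum_{n \in S}\sigma_n\phi_n(x,y)|$ by $\|M_k\|_{L^2} \leq C_1 k\,\|P_k\|_{L^2} = O(k\cdot 2^{-(\alpha-1/2)k})$, which is still summable in $k$; hence $\sum_k M_k(x,y) < \infty$ on $E$ after a null-set modification. For any permutation $\pi$ of $\mathbb{N}$, once $\pi(\{1,\ldots,N_1\})$ contains the first $K$ blocks entirely, the Cauchy difference $|T_{N_2} - T_{N_1}|$ of rearranged partial sums is bounded by $\sum_{k \geq K}M_k(x,y) \to 0$ as $K \to \infty$, so the rearranged partial sums are Cauchy on $E$ and converge; $L^2$-convergence identifies the common limit as $K$.

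The main technical obstacle is the subset-maximal estimate within each block: the textbook Menshov--Rademacher bound handles only prefix maxima $\max_m|\sum_{n \leq m}a_n\phi_n|$, and upgrading to the supremum over arbitrary $S \subseteq I_k$ requires either a dyadic bisection inside the block (introducing an extra $\log|I_k|$ factor that remains summable against the geometric decay of $\|P_k\|_{L^2}$) or a Rademacher-sign averaging argument. Tandori's theorem on Weyl multipliers, which asserts that the sharp multiplier for unconditional a.e. convergence of orthogonal series coincides with $\log n$, provides precisely this upgrade; once it is in place, the remaining bookkeeping about how arbitrary permutations intersect the dyadic intervals is routine.
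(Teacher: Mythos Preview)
Your reduction to an orthonormal series in $L^2([a,b]\times[c,d])$ and the dyadic-block estimate $\sum_k\|P_k\|_{L^2}<\infty$ (hence $\sum_k|P_k|<\infty$ a.e.) are correct, but the subset-maximal step contains a genuine gap. The claimed bound $\|M_k\|_{L^2}\le C_1 k\,\|P_k\|_{L^2}$ with $M_k=\sup_{S\subseteq I_k}\bigl|\sum_{n\in S}\sigma_n\phi_n\bigr|$ is \emph{false} for general orthonormal systems. Take $\phi_n$ to be the Rademacher functions and set every coefficient in $I_k$ equal to $2^{-\alpha k}$ (consistent with $\sigma_n\le Cn^{-\alpha}$); for each $x$ the choice $S=\{n\in I_k:r_n(x)=1\}$ gives $M_k(x)\ge \tfrac12\cdot 2^k\cdot 2^{-\alpha k}=2^{(1-\alpha)k-1}$, while $\|P_k\|_{L^2}=2^{(1/2-\alpha)k}$. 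The ratio is $2^{k/2}$, not $k$, so $\sum_k\|M_k\|_{L^2}$ diverges throughout the range $\tfrac12<\alpha\le 1$. Neither remedy you propose closes this: the dyadic bisection that underlies Menshov--Rademacher controls maxima over \emph{contiguous subintervals} of the index set, not over the $2^{|I_k|}$ arbitrary subsets needed for arbitrary permutations; and Rademacher-sign averaging bounds the \emph{expectation} over signs, not the pointwise supremum. Your appeal to Tandori is also off (the multiplier is $(\log n)^2$, and Tandori's contribution is the sharpness); more to the point, any such unconditional-convergence theorem is exactly the conclusion you are trying to prove, so invoking it collapses the whole dyadic scaffolding.

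The paper avoids $M_k$ entirely. It fixes $x$, regards $\sum_n\sigma_n u_n(x)\,v_n(y)$ as an orthonormal series in $y$ with coefficients $a_n=\sigma_n u_n(x)$, and invokes a black-box criterion from Kashin--Saakyan: if $\sum_n|a_n|^{2-\epsilon}<\infty$ for some $\epsilon>0$, then the series converges unconditionally a.e. The work is then a H\"older splitting $|\sigma_n u_n(x)|^{2-\epsilon}=|\sigma_n|^{\delta}\cdot|\sigma_n|^{2-\epsilon-\delta}|u_n(x)|^{2-\epsilon}$ with conjugate exponents $2/\epsilon$ and $2/(2-\epsilon)$, which reduces to $\sum_n|\sigma_n|^{2\delta/\epsilon}<\infty$ and $\int\sum_n|\sigma_n|^{(4-2\epsilon-2\delta)/(2-\epsilon)}|u_n(x)|^2\,dx<\infty$; both hold for suitable $\delta>\epsilon>0$ whenever $\alpha>\tfrac12$, yielding the $\ell^{2-\epsilon}$ condition for a.e.\ $x$. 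If you prefer your product-space formulation, the clean repair is simply to cite an unconditional-convergence theorem for orthogonal series (the Kashin--Saakyan criterion applied to $\phi_n(x,y)=u_n(x)v_n(y)$ works directly, since $\sum_n\sigma_n^{2-\epsilon}<\infty$ for small $\epsilon$) and discard the attempt to estimate $M_k$.
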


To prove \Cref{thm:unconditional}, we use the theory of orthonormal series (see \Cref{lem:ONSunconditional}), which can be thought of as an extension of the Rademacher--Menchov theorem. We immediately conclude the following result by combining \Cref{prop:singularvaluebound} and \Cref{thm:unconditional}. 
\begin{corollary}\label{cor:sveunconditional}
Let $[a,b],[c,d]\subset\mathbb{R}$. Any continuous kernel $K:[a, b]\times [c,d]\to\mathbb{R}$ of uniform bounded variation (see~\cref{eq:boundedVariation}) has a Mercer's expansion that converges unconditionally almost everywhere. 
\end{corollary}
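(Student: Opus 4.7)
The plan is to derive Corollary~\ref{cor:sveunconditional} as an immediate consequence of the two preceding results, namely Proposition~\ref{prop:singularvaluebound} (singular value decay under uniform bounded variation) and Theorem~\ref{thm:unconditional} (unconditional a.e.\ convergence under a singular value decay rate faster than $n^{-1/2}$). First I would invoke Proposition~\ref{prop:singularvaluebound} on the given kernel $K:[a,b]\times[c,d]\to\mathbb{R}$, whose hypotheses (continuity plus the uniform bounded variation condition~\cref{eq:boundedVariation}) are satisfied by assumption, to conclude that $\sigma_n = \mathcal{O}(n^{-1})$ as $n\to\infty$.

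Next I would convert this asymptotic statement into the uniform bound required by Theorem~\ref{thm:unconditional}. The $\mathcal{O}$ notation yields a constant $C_0>0$ and an index $N$ such that $\sigma_n \leq C_0 n^{-1}$ for all $n\geq N$; for the finitely many indices $n<N$, I would simply enlarge the constant, setting $C := \max\bigl(C_0,\, \max_{1\leq n<N} n\sigma_n\bigr)$, so that $\sigma_n \leq C n^{-1}$ holds for every $n\in\mathbb{N}$. Applying Theorem~\ref{thm:unconditional} with $\alpha=1 > \tfrac{1}{2}$ then immediately gives that a Mercer's expansion of $K$ converges unconditionally almost everywhere, which is the claim.

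There is essentially no main obstacle here: the corollary is a direct composition of the two preceding results, and the only technical point is the trivial adjustment of constants when passing from the big-$\mathcal{O}$ bound to a bound valid for all $n$. All heavier machinery (the Legendre coefficient bound and Eckart–Young–Mirsky for the decay proposition, and the Rademacher–Menchov-type result cited as Lemma~\ref{lem:ONSunconditional} for the unconditional convergence theorem) has already been absorbed into the statements being invoked.
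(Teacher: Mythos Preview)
Your proposal is correct and matches the paper's approach exactly: the paper states that Corollary~\ref{cor:sveunconditional} follows immediately by combining Proposition~\ref{prop:singularvaluebound} and Theorem~\ref{thm:unconditional}, which is precisely what you do. The minor step of upgrading the $\mathcal{O}(n^{-1})$ bound to a uniform constant $C$ is handled correctly and is the only detail the paper leaves implicit.
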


\Cref{cor:sveunconditional} tells us that continuous kernels of uniform bounded variation have a Mercer's expansion that converges almost everywhere, regardless of the order in which the terms are summed. Unconditional almost everywhere convergence is closely related to absolute convergence. Unconditional almost everywhere convergence ensures that the expansion's convergence behavior is stable across the domain and that almost every point adheres to the same convergence properties. It is weaker than absolute convergence because there is possibly an exception set of measure zero. 

One can also use \Cref{thm:unconditional} to conclude that Mercer's expansions converge unconditionally almost everywhere under alternative smoothness assumptions. For example, all continuous kernels that are H{\"o}lder continuous uniformly in each variable separately, i.e., there is a constant $C<\infty$ and $\gamma>0$ such that
\begin{equation*}
|K(x_1, y)-K(x_2, y)| \leq C |x_1 - x_2|^\gamma,\hspace{0.5cm} |K(x, y_1) - K(x, y_2)| \leq C|y_1 - y_2|^\gamma,
\end{equation*}
for all $x_1, x_2, y_1, y_2, x, y$, have a Mercer's expansion that converges unconditionally almost everywhere. This is because such kernels have the singular value decay $\sigma_n \leq C n^{-\frac{1}{2}-\gamma}$, which can be proved by Theorem 12 of \citep{smithies1938eigen} with the choice of $p=2$. 

\subsubsection{Almost uniform convergence}
We now consider the uniform convergence of Mercer's expansion. We show that the expansion converges almost uniformly for continuous kernels with uniform bounded variation.
\begin{definition}
A sequence of functions $\{f_n\}_{n\in\mathbb{N}}$ is called almost uniformly convergent on a measurable set $E$ if for any $\epsilon>0$ there exists a set $E_\epsilon$ with measure smaller than $\epsilon$, such that the sequence $\{f_n\}_{n\in\mathbb{N}}$ converges uniformly on $E\backslash E_\epsilon$.
\end{definition}

We obtain our second main convergence result as follows. 
\begin{theorem}\label{thm:almostuniform}
With the same assumptions as \Cref{cor:sveunconditional}, Mercer's expansion of $K$ converges almost uniformly in $y$ for almost every (fixed) $x$, and it converges almost uniformly in $x$ for almost every $y$. 
\end{theorem}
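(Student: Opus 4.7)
The plan is to bootstrap the pointwise almost everywhere convergence already established in \Cref{cor:sveunconditional} to almost uniform convergence in each variable separately, using Fubini's theorem to pass from a product-measure-zero exception set to slices of one-dimensional measure zero, and then invoking Egorov's theorem on the bounded interval. Since the two halves of the statement are symmetric (interchange the roles of $x,y$ and of $u_n,v_n$), it suffices to prove almost uniform convergence in $y$ for almost every fixed $x$.

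First, I would invoke \Cref{cor:sveunconditional} to produce a set $E\subset[a,b]\times[c,d]$ of two-dimensional Lebesgue measure zero on whose complement the series $\sum_n \sigma_n u_n(x)v_n(y)$ converges. Since the SVE converges to $K$ in the $L^2$ sense and $K$ is continuous, the pointwise limit on $[a,b]\times[c,d]\setminus E$ must coincide with $K(x,y)$ (after possibly enlarging $E$ by a further null set). Next, by Fubini/Tonelli, for almost every $x\in[a,b]$ the slice $E_x:=\{y\in[c,d]:(x,y)\in E\}$ has one-dimensional Lebesgue measure zero, and for any such $x$ the partial sums
\begin{equation*}
S_N(x,y)\;:=\;\sum_{n=1}^N \sigma_n u_n(x)v_n(y)
\end{equation*}
are continuous (hence measurable) in $y$ and converge to the continuous function $K(x,\cdot)$ at every point of $[c,d]\setminus E_x$.

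Finally, since $[c,d]$ has finite Lebesgue measure, Egorov's theorem applies to the sequence $\{S_N(x,\cdot)\}_{N\in\mathbb{N}}$: for every $\epsilon>0$ there exists a measurable set $F_\epsilon\subset[c,d]$ with $|F_\epsilon|<\epsilon$ such that $S_N(x,\cdot)\to K(x,\cdot)$ uniformly on $[c,d]\setminus F_\epsilon$. This is precisely the definition of almost uniform convergence in $y$, and it holds for the co-null set of $x$ produced by the Fubini step. Swapping the roles of the variables (and using that the transposed kernel $K(y,x)$ has the same singular values with the roles of $u_n,v_n$ interchanged, so that \Cref{cor:sveunconditional} applies equally) yields almost uniform convergence in $x$ for almost every fixed $y$.

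The main technical step is really just the justification that \Cref{cor:sveunconditional} delivers genuine pointwise convergence of the partial sums to $K$ on a co-null set (as opposed to merely unconditional convergence of the series, whose sum a priori need not equal $K$); reconciling the $L^2$ limit with the pointwise limit on the common domain of definition handles this, and after that Egorov does all the work. No obstacle beyond correctly tracking null sets is anticipated.
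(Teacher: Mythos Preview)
Your proposal is correct and follows essentially the same route as the paper: extract from \Cref{cor:sveunconditional} a product-measure-zero exceptional set, pass to slices via Fubini, and then apply Egorov's theorem on the bounded interval to upgrade pointwise a.e.\ convergence to almost uniform convergence. The paper's proof is terser (it does not spell out the Fubini step or the identification of the pointwise limit with $K$), but the skeleton is identical, and your added care about reconciling the $L^2$ and pointwise limits is a reasonable refinement rather than a different idea.
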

\begin{proof}
From \Cref{cor:sveunconditional}, for almost every (fixed) $x$, we have the unconditional convergence of Mercer's expansion for almost every $y$. Since unconditional convergence implies pointwise convergence, Egorov's theorem \citep{egoroff1911suites} tells us that Mercer's expansion converges almost uniformly for $y$, for almost every $x$. The two-dimensional almost uniform convergence property also follows from the fact that Mercer's expansion converges pointwisely for almost every $(x, y)\in[a,b]\times [c,d]$. 
\end{proof}

The distinction between almost uniform convergence and uniform convergence is subtle. Uniform convergence means that the expansion converges to a limit so that the convergence rate is the same across the entire domain. In contrast, almost uniform convergence is a relaxation by allowing exceptional sets of arbitrarily small measure. 

\section{Decay of Singular Values for Smooth Kernels}\label{sec:singularvaluedecay}

Mercer's expansion is equivalent to the SVE of a kernel, and this means that if Mercer's expansion for $K$ is truncated after $k$ terms, then it forms a rank $k$ kernel that is a best rank-$k$ approximation\footnote{A nonzero kernel $K$ is rank-$1$ if it can be written as $K(x,y) = g(x)h(y)$. A sum of $k$ rank-$1$ kernels is of rank $\leq k$.} to $K$ in the $L^2$ sense~\citep{schmidt1907theorie}. Thus, singular value bounds help understand the approximation power of low-rank approximations \citep{reade1983eigenvalues,townsend2014computing}. 

Recall that $\tilde{P}_n(x)$ is normalized Legendre polynomial of degree $n$. If $f:[-1,1]\rightarrow\mathbb{R}$ is a continuous function of bounded variation, then its Legendre expansion is given by 
\begin{equation*}
f(x) = \sum_{n=0}^\infty a_n\tilde{P}_n(x), \quad a_n = \int_{-1}^1 f(x) \tilde{P}_n(x) dx,
\end{equation*}
which converges absolutely and uniformly to $f$~\citep{Wang2023}. Moreover, the $L^2$ projection of $f$ onto the space of polynomials of degree $\leq k$ is precisely the truncation of this series, i.e., $f_k(x) = \sum_{n=0}^k a_n\tilde{P}_n(x)$. When $f$ is $(r-1)$-times continuously differentiable and its $r^\text{th}$ derivative, $f^{(r)}$, has bounded variation, $V<\infty$, then for any $k>r+1$, it holds that~\citep[Thm 3.5]{Wang2023}
\begin{equation}\label{eq:approx-poly}
    \|f - f_{k-1}\|_{L^2} \le \frac{\sqrt{2}V}{\sqrt{\pi(r+\frac{1}{2})}(k-r-1)^{r+\frac{1}{2}}} = \mathcal{O}(k^{-(r+\frac{1}{2})}).
\end{equation}

One can use~\cref{eq:approx-poly} to bound the singular values of kernels defined on $[-1,1]\times [-1,1]$ using an analogous argument to that found in~\citep{reade1983eigenvalues}. If $K(x,\cdot)$ is $(r-1)$-times continuously differentiable and its $r^\text{th}$ derivative in $x$ has bounded variation, $V<\infty$, uniformly in $y$,\footnote{Recall that we say that $K(x,\cdot)$ is of bounded variation uniformly in $y$ if the function $x\rightarrow K(x,y)$ is of bounded variation for every $y$ with the same constant $V$.} then the Eckart--Young--Mirsky theorem shows that $K_k^{\text{leg}}(x,y) = \sum_{n=0}^{k-1} a_n(y) \tilde{P}_n(x)$ satisfies
\begin{equation*}
\sqrt{\sum_{n=k+1}^\infty \!\!\!\sigma_n^2}\leq \|K - K_k^{\text{leg}}\|_{L^2}\!\leq \!\!\max_{y\in[-1,1]} \!\!\!\sqrt{2}\|K(\cdot,y) - \sum_{n=0}^{k-1} a_n(y) \tilde{P}_n(\cdot)\|_{L^2}\! \leq \!\frac{2V(\pi (r+\frac{1}{2}))^{-\frac{1}{2}}}{(k-r-1)^{r+\frac{1}{2}}}, 
\end{equation*}
where the first inequality comes from the fact that $\sqrt{\sum_{n=k+1}^\infty \sigma_n^2}$ is the best rank-$k$ approximation error to $K$ and the last inequality comes from~\cref{eq:approx-poly}.  

\par Since the singular values are indexed in non-increasing order, we have $k\sigma_{2k}^2 \le \sum_{n=k+1}^{2k} \sigma_n^2 \le \sum_{n=k+1}^\infty \sigma_n^2$. Thus, for $k > r+1$, we have
\begin{equation}\label{eq:approx-sv}
	\sigma_{2k} \le \frac{2V(\pi (r+\frac{1}{2}))^{-\frac{1}{2}}}{\sqrt k(k-r-1)^{r+\frac{1}{2}}} = \mathcal{O}(k^{-(r+1)}).
\end{equation}
The same bound on $\sigma_n$ holds if $K(\cdot,y)$ is $(r-1)$-times continuously differentiable and its $r^\text{th}$ derivative in $y$ has bounded variation $V$, uniformly in $x$. By transplanting kernels defined on $[a,b]\times[c,d]$, the bounds in~\cref{eq:approx-sv} can be modified for kernels defined on any domain. We believe the bounds in~\cref{eq:approx-sv} are asymptotically tight. A reader can refer to Figure 2.4 (right) of \citep{townsend2014computing} for related numerical experiments.

When $K$ is continuous with uniform bounded variation, we have $r = 0$ and~\cref{eq:approx-sv} shows that $\sigma_n = \mathcal{O}(n^{-1})$, which proves~\Cref{prop:singularvaluebound}. The bounds in~\cref{eq:approx-sv} show us that the smoother a kernel, the faster its singular values decay to zero. In practice, if a kernel has rapidly decaying singular values its Mercer's expansion can be truncated after a small number of terms. 
 
\begin{figure}
\begin{minipage}{.49\textwidth}
\begin{overpic}[width=\textwidth]{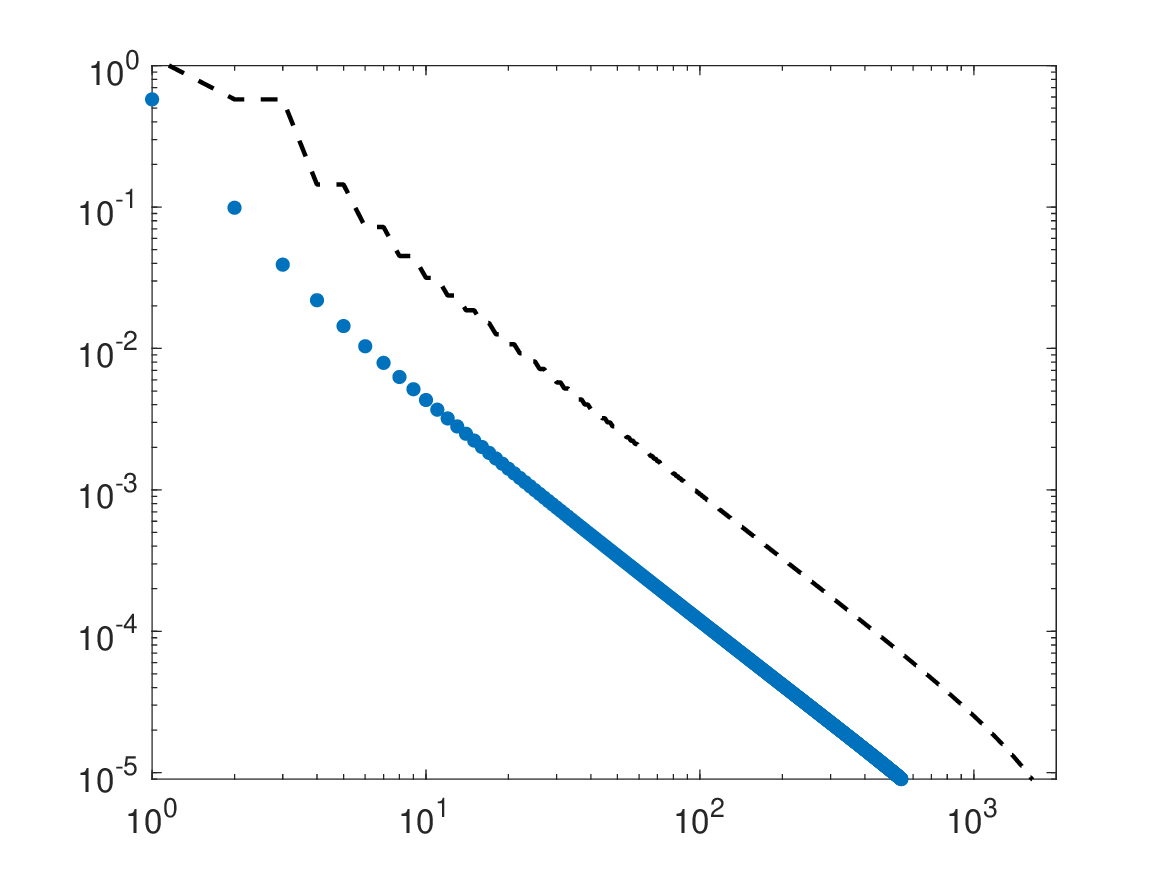}
\put(50,0) {$k$}
\put(30,60) {\rotatebox{-38}{$\max_{y}\sqrt{2}\|K-\sum_{n=0}^{k-1}a_n(y)\tilde{P}_n\|_{L^2}$}}
\put(25,38) {\rotatebox{-38}{$\sqrt{\sum_{n=k+1}^\infty \sigma_n^2}$}}
\end{overpic} 
\end{minipage}
\begin{minipage}{.49\textwidth}
\begin{overpic}[width=\textwidth]{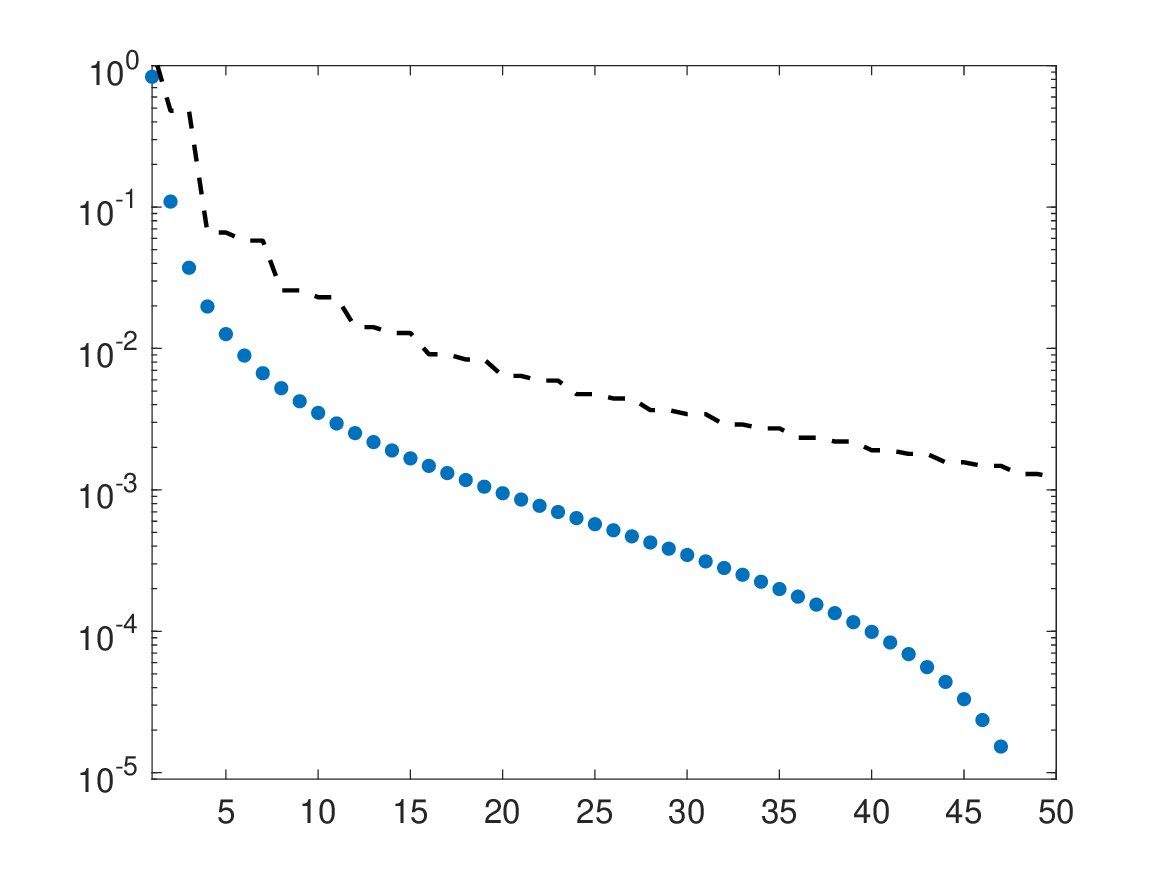}
\put(19,58) {\rotatebox{-16}{$\max_{y}\sqrt{2}\|K-\sum_{n=0}^{k-1}a_n(y)\tilde{P}_n\|_{L^2}$}}
\put(30,28) {\rotatebox{-18}{$\sqrt{\sum_{n=k+1}^\infty \sigma_n^2}$}}
\put(50,0) {$k$}
\end{overpic}
\end{minipage}
\caption{Singular value decay of two kernels defined on $[-1,1]\times [-1,1]$: (Left) $K(x,y) = \max\left\{0,1-|x|-|y|\right\}$ and (Right) $K(x,y) = \exp(-(x^2 + y^2)/200)\max(0, (1 - |x| - |y|))$. The bounds given by the truncated Legendre expansions can be tight, allowing one to determine the number of terms required in Mercer's expansion to achieve a desired accuracy.}
\end{figure} 

\section{Computing Mercer's expansion for general kernels}\label{sec:numerical}

We now describe an algorithm for computing a Mercer's expansion of a kernel, which is described in~\citep{townsend2013extension} for computing low-rank function approximations. Here, we use the fact that it is observed to compute near-best low-rank function approximations~\citep{townsend2014computing}. It involves two main steps: (1) Forming a low-rank approximant using a pseudo-skeleton approximation and (2) Compressing the approximant using a low-rank SVD~\citep[Chapt.~1.1.4]{bebendorf2008hierarchical}. This procedure can be much more efficient than sampling the kernel on a large grid and computing the matrix SVD. An algorithmic summary (and discussion on its complexity) can also be found in Figure 6.1 of \citep{townsend2013extension}. 

\subsection{Step 1: Computing a pseudo-skeleton approximant}
In the first step, we compute a pseudo-skeleton approximation using Gaussian elimination with complete pivoting (GECP), which is an iterative procedure to approximate the kernel $K(x,y)$ as a sum of rank-1 functions~\citep{townsend2013extension}. The algorithm is implemented in the two-dimensional side of Chebfun~\citep{driscoll2014chebfun}. Similar algorithms are called Adaptive Cross Approximation~\citep{bebendorf2000approximation} and maximum volume pseudo-skeleton approximation~\citep{goreinov1997theory}. 

The algorithm starts by defining $e_0 = K$ and finding an approximate location of a maximum absolute value of $e_0(x, y)$. A rank-1  function approximation is constructed that interpolates $K$ along the $x=x_0$ and $y=y_0$ lines, i.e.,
\[
K_1(x, y) = \frac{e_0(x_0, y) e_0(x, y_0)}{e_0(x_0, y_0)}.
\]
Then, we define the residual $e_1 = K - K_1$, and the process is repeated on $e_1$ to form $e_2$, and so on. After $R$ steps, a rank-$R$ approximation is constructed that interpolates $K$ along $2R$ lines. We stop the iteration when the residual error is below a user-defined tolerance such as $10^{-5}$. 

To find an approximate location of a maximum absolute value, i.e., the pivot location, at each step of the algorithm, we sample $e_j$ for $j=0,1,\ldots,$ on a coarse Chebyshev tensor grid of an adaptively selected size and select an absolute maximum from that grid. In this way, we only need to evaluate the kernel $K$ on coarse grids, and $K$ does not need to be densely sampled. At the end of step 1, we have constructed a rank $\leq R$ approximation, given by 
\begin{equation} 
K_R(x, y) = \sum_{j=1}^{R} c_j \phi_j(x) \psi_j(y)
\label{eq:pseudoskeleton}
\end{equation} 
where $c_j = 1/e_{j-1}(x_{j-1},y_{j-1})$ are the reciprocals of the pivot values, and $\phi_j(x)=e_{j-1}(x,y_{j-1})$ and $\psi_j(y)=e_{j-1}(x_{j-1},y)$ are univariate functions. We represent $\phi_j$ and $\psi_j$ as Chebyshev expansions of an adaptively selected degree, which requires that $K$ is densely sampled along $2R$ lines.

This first step is observed to compute a near-optimal rank $\leq R$ approximation to kernels. In~\Cref{fig:pivots} we use it to approximate $K(x,y) = \tanh(100\,xy+1)$ on the domain $[-1,1]\times[-1,1]$, where it decides that $R = 96$ is sufficient to approximation $K$ to extremely high precision. More algorithmic details and experiments are given in~\citep{townsend2013extension,townsend2014computing}. 

\begin{figure}
\begin{minipage}{.49\textwidth}
\begin{overpic}[width=.9\textwidth,trim=80 80 80 30, clip]{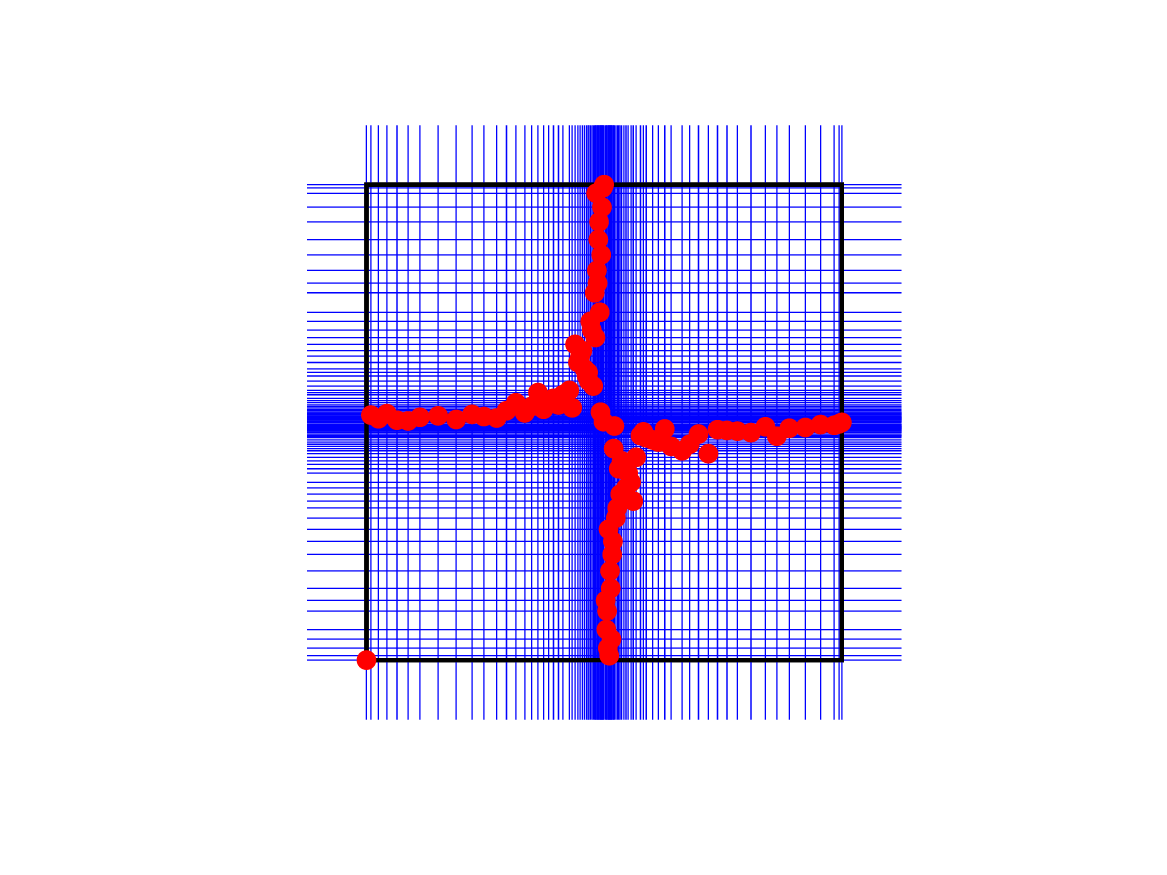}

\end{overpic} 
\end{minipage}
\begin{minipage}{.49\textwidth}
\begin{overpic}[width=\textwidth]{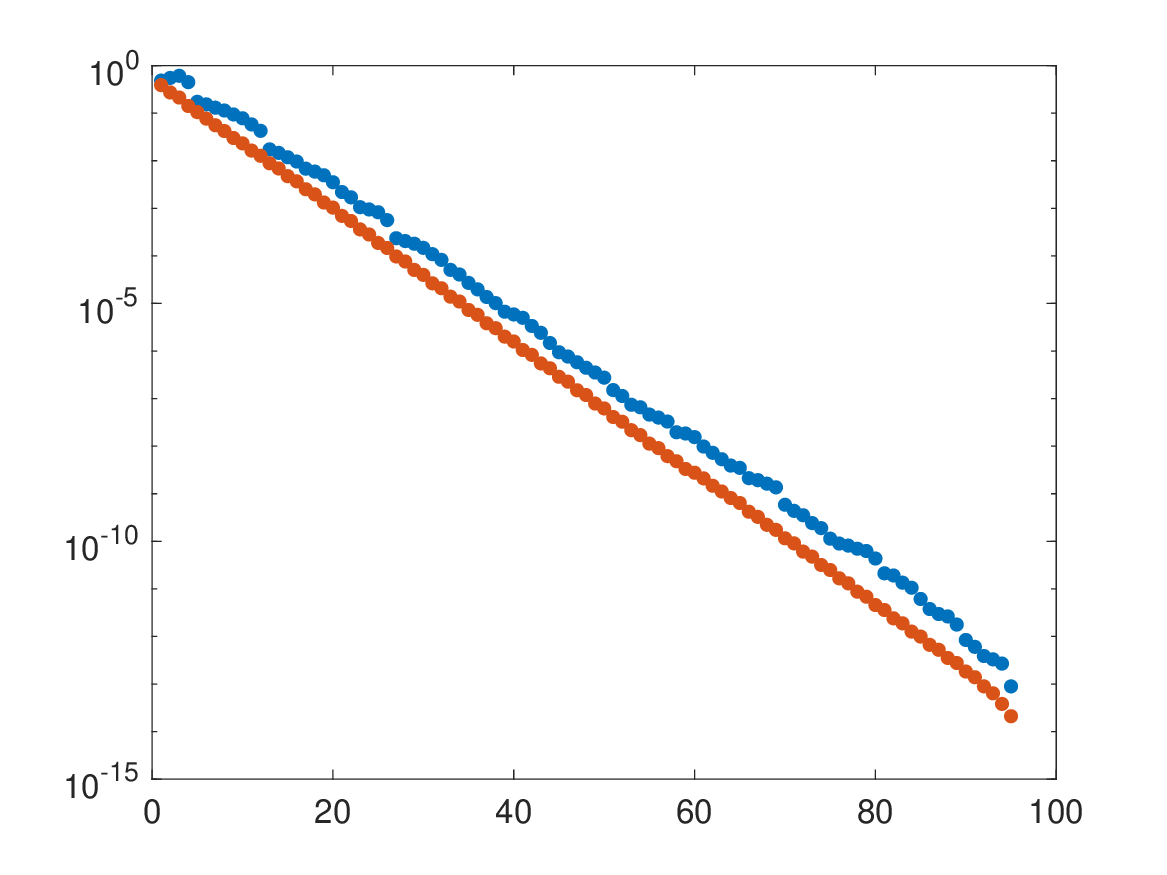}
\put(50,50) {\rotatebox{-38}{$\|K-K_{r}\|_{L^2}$}}
\put(35,40) {\rotatebox{-38}{$\sqrt{\sum_{j=r+1}^\infty \sigma_j^2}$}}
\put(50,0) {$r$}
\end{overpic}
\end{minipage}
\caption{The kernel $K(x,y) = \tanh(100xy+1)$ on $[-1,1]\times[-1,1]$. Left: Step 1 of our algorithm selects $R = 96$ pivot locations (red dots). The algorithm densely samples $K$ along $2R$ lines (blue lines) to form its pseudoskeleton approximant. Right: The best $L^2$ error compared to $\|K-K_r\|_{L^2}$ for $1\leq r\leq 96$, where $K_r$ is the rank $\leq r$ pseudoskeleton approximation of $K$. At $R = 96$, step 1 of our algorithm terminates as it has pointwise approximated $K$ to extremely high accuracy.}
\label{fig:pivots}
\end{figure} 

\subsection{Step 2: Low-Rank SVD of the pseudo-skeleton approximant}
After we have computed a rank $\leq R$ pseudo-skeleton approximation in step 1, we improve it by performing a low-rank SVD. The SVD decomposes $K_R(x, y)$ into a sum of outer products of orthonormal functions with singular values and gives us an accurate truncated Mercer's expansion of $K$. To do this, we write in the form $K_R(x, y) = \Phi(x) C \Psi(y)^\top$, where
\[
\Phi(x) = \begin{bmatrix}\phi_1(x) & \cdots & \phi_R(x) \end{bmatrix}, \quad \Psi(x) = \begin{bmatrix}\psi_1(y) & \cdots & \psi_R(y) \end{bmatrix}, \quad C = \begin{bmatrix}c_1 \\ & \ddots \\ && c_R \end{bmatrix}.
\]

The procedure for computing the low-rank SVD of $K_R$ involves the following steps, which is essentially a fast way to compute a Mercer's expansion of a finite rank kernel:
\begin{enumerate}[leftmargin=*,noitemsep]
    \item Perform two QR decompositions of $\Phi(x)$ and $\Psi(y)$, using a function analogue of Householder QR~\citep{trefethen2010householder}. We can write this QR decomposition as $\Phi(x) = Q^{\text{left}}(x) R_1$ and $\Psi(y) = Q^{\text{right}}(y)R_2$, where $R_1,R_2\in\mathbb{R}^{R\times R}$ and the columns of $Q^{\text{left}}(x)$ and $Q^{\text{right}}(y)$ are orthonormal functions. 
    \item Compute the SVD of an $R\times R$ matrix formed by $R_1CR_2^\top = U\Sigma V^\top$.
    \item Construct the final SVD-based approximation by combining the singular values and the orthonormalized functions to form:
    \begin{equation} 
    K_R(x, y) = \sum_{j=1}^{R} \sigma_j u_j(x) v_j(y),
    \label{eq:MercerRankR}
    \end{equation} 
    where $\sigma_1,\ldots,\sigma_R$ are the diagonal entries of $\Sigma$, $u_j(x) = \sum_{s=1}^R U_{sj}Q_s^{\text{left}}(x)$, and $v_j(y) = \sum_{s=1}^R V_{sj}Q_s^{\text{right}}(y)$.
\end{enumerate}

After we have formed~\cref{eq:MercerRankR}, we truncate the expansion down to $k<R$ terms to give a Mercer's expansion of $K$ truncated after $k$ terms. Then, this final approximation not only compresses the pseudo-skeleton approximant from step 1 and ensures that the resulting representation is very close to an optimal rank $\leq k$ approximation with respect to the $L^2$ norm. 

If a rank-$R$ approximation of $K$ is computed and each slice needs to be evaluated $\mathcal{O}(n)$ times, then the cost of GECP and compression is about $\mathcal{O}(R^2n + R^3)$ operations. If we discretize first and then compute the discrete SVD, the cost would be $\mathcal{O}(n^3)$ operations.

\section{Conclusion}

We derive new examples of kernels to show that the convergence properties of Mercer's expansion can be subtle for continuous kernels that are not symmetric positive definite. In particular, continuity alone is not enough to guarantee that a kernel has a Mercer's expansion, which holds pointwise.  We then prove that any continuous kernel with uniform bounded variation has a Mercer's expansion that converges pointwise almost everywhere, unconditionally almost everywhere, and almost uniformly. We derive a new bound on the decay of singular values for smooth kernels and also provide an efficient algorithm for computing Mercer's expansion. 

\section{Acknowledgements} The second author was supported by the SciAI Center, funded by the Office of Naval Research under Grant Number N00014-23-1-2729 and NSF CAREER (DMS-2045646).

\bibliography{iclr2025_conference}
\bibliographystyle{iclr2025_conference}

\appendix

\section{Indefinite Kernels whose Mercer's Expansion Converges Pointwise but not Absolutely or Uniformly. }\label{sec:proofexamples}

In this section, we prove that Mercer's expansion can converge pointwise, but not absolutely, and may converge pointwise, but not uniformly. 

\subsection{A Mercer's expansion that converges pointwise, but not absolutely}
We begin by proving that for any $[a,b]\subset \mathbb{R}$ there exists a symmetric indefinite kernel $K_{\text{abs}}:[a,b]\times [a,b]\rightarrow\mathbb{R}$ with a Mercer's expansion that converges pointwise, but not absolutely. 

\begin{proof}[Proof of~\Cref{prop:Kabsolute}]
Without loss of generality, we assume that $[a,b]=[-1,1]$; otherwise, we can transplant the example below to have this domain. Consider the kernel $K_{\text{abs}}:[-1, 1]\times [-1, 1]\to \mathbb{R}$ (also defined in \cref{eq:kernel_a}): 
\begin{equation}\label{eq:kernel_a_inproof}
    K_{\text{abs}}(x, y) = \sum_{n=1}^\infty \frac{(-1)^n}{n^2}\tilde{P}_{2n}(x)\tilde{P}_{2n}(y),
\end{equation}
where $\tilde{P}_n$ is the degree $n$ normalized Legendre polynomial. First, we show that the expansion in~\cref{eq:kernel_a_inproof} converges pointwise so that the value of $K_{\text{abs}}(x, y)$ is well-defined. Recall that Bernstein's inequality for $x\in(-1, 1)$, says that 
\begin{equation*}
    |\tilde{P}_n(x)| < \sqrt\frac{2}{\pi} \frac{1}{(1-x^2)^{1/4}}, \qquad n\geq 0. 
\end{equation*}
Furthermore, $|\tilde{P}_n(x)|\leq \sqrt{n+1/2}$ for all $x\in[-1,1]$~(see~\cite[18.14.4]{NIST:DLMF} with $\lambda=1/2$). Consider any closed set $S=[a, b]\times[-1, 1]$ with $-1<a< b<1$ and let $M = \sqrt{\frac{2}{\pi}}\max_{x\in\{a, b\}}\{(1-x^2)^{-1/4}\}$. Then, $|\tilde{P}_n(y)|\leq M$ for any $n\geq 0$ and $y\in[a,b]$. Thus, for any $(x, y)\in S$, we have 
\begin{equation*}
    \sum_{n=1}^\infty\left|\frac{(-1)^n}{n^2}\tilde{P}_{2n}(x)\tilde{P}_{2n}(y)\right|\leq M \sum_{n=1}^\infty \frac{\sqrt{n+1/2}}{n^2} <\infty,
\end{equation*}
which, by the Weierstrass $M$-test, implies that the series in~\cref{eq:kernel_a_inproof} converges absolutely and uniformly on $[a, b]\times [-1, 1]$. Similarly, the series converges absolutely and uniformly on $[-1, 1]\times [a, b]$ for any $-1<a<b<1$. Consequently, we have pointwise convergence of the series in~\cref{eq:kernel_a_inproof} and continuity of $K$  everywhere except possibly at the four corners $(\pm 1, \pm 1)$. To check that the series converges pointwise at the four corners, note that $\tilde{P}_{2n}(\pm 1) = \sqrt{2n+1/2}$ so 
\begin{equation*}
    K_{\text{abs}}(\pm 1, \pm 1) =\sum_{n=1}^\infty \frac{(-1)^n}{n^2} \sqrt{2n+1/2}\cdot\sqrt{2n+1/2} =  \sum_{n=1}^\infty (-1)^n \left(\frac{2}{n} + \frac{1}{2n^2}\right),
\end{equation*}
which is an alternating series that converges. Thus $K_{\text{abs}}(x,y)$ is well-defined pointwise by its expansion for any $(x,y)\in[-1, 1]\times [-1, 1]$. 

\par To prove that $K_{\text{abs}}$ is continuous at $(\pm 1, \pm 1)$, we prove that $\hat{K}_{\text{abs}}(x, y) = \sum \frac{(-1)^n}{n} P_{2n}(x)P_{2n}(y)$ is continuous at the four corners. This suffices as $K_{\text{abs}}$ and $\hat{K}_{\text{abs}}$ only differ by a continuous function.  Moreover, by symmetry, continuity at $(1, 1)$ guarantees continuity at all four corners.

\par Let us first prove the continuity of $\hat{K}_{\text{abs}}$ along the edge $[\frac{1}{2},1]\times \{1\}$, which will be useful. For any $(x, y)$ on $ [\frac{1}{2}, 1]\times \{1\}$ we have
\begin{equation*}
    \hat{K}_{\text{abs}}(x,y) = \sum_{n=1}^\infty \frac{(-1)^n}{n}P_{2n}(x) =: G(x).
\end{equation*} 
Using the generating function of Legendre polynomials \citep{szeg1939orthogonal}, we find that
\begin{equation*}
    \frac{1}{t\sqrt{1+t^2-2xt}}-\frac{1}{t} = \sum_{n=1}^\infty P_n(x)t^{n-1}, \qquad x\in[-1,1].
\end{equation*}
Integrating both sides against $t$ from $0$ to $s$ we see that
\begin{equation*}
    \int_0^s \left(\frac{1}{t\sqrt{1+t^2-2xt}}-\frac{1}{t}\right)dt =\log\left(\frac{4(x-s+\sqrt{1+s^2-2sx})}{(1+x)(1-s+\sqrt{1+s^2-2sx})^2}\right)=\sum_{n=1}^\infty \frac{1}{n}t^n P_n(x). 
\end{equation*}
We now do the substitution $s\to i$, multiply by $2$, and take the real part, to obtain
\begin{equation}\label{eq:edgeKaform}
    \log\left(\frac{4}{(1+\sqrt{x})^2(1+x)}\right) = \sum_{n=1}^{\infty}\frac{(-1)^n}{n}P_{2n}(x) = G(x),
\end{equation}
for $x\in[\frac{1}{2}, 1]$. The left-hand side is a smooth function, which proves the continuity of $G(x)$ on $[\frac{1}{2},1]$. In particular, we have the continuity of $G(x)$ at $x=1$. 

Using the product formula for Legendre functions \citep[\S 3.15, eq. (20)]{erdelyi1953higher}
\begin{equation*}
    P_{2n}(x)P_{2n}(y) = \frac{1}{\pi}\int_0^\pi P_{2n}\left(xy + \cos\theta\sqrt{(1-x^2)(1-y^2)}\right) d\theta,
\end{equation*}
we have
\begin{align*}
    \hat{K}_{\text{abs}}(x, y) &= \sum_{n=1}^\infty\frac{(-1)^n}{n\pi}\int_0^\pi P_{2n}\left(xy+\cos\theta\sqrt{(1-x^2)(1-y^2)}\right) d\theta\\
    &=\frac{1}{\pi}\int_0^\pi G\left(xy+\cos\theta\sqrt{(1-x^2)(1-y^2)}\right) d\theta,
\end{align*}
by \cref{eq:edgeKaform}. (We can interchange the integration and summation due to uniform convergence of $G$ on $[a, b]\subset(-1, 1)$.) Note that $xy+\cos\theta\sqrt{(1-x^2)(1-y^2)})\leq 1$ by the Cauchy--Schwarz inequality. From the mean value theorem and continuity of $G$, for any $x, y$ sufficiently close to $1$, we have some $z\in[xy-\sqrt{(1-x^2)(1-y^2)}, xy+\sqrt{(1-x^2)(1-y^2)}]$ such that
\begin{equation*}
    \sum_{n=1}^\infty \frac{(-1)^n}{n}P_{2n}(x)P_{2n}(y) = G(z). 
\end{equation*}
As $x, y\to 1$, the interval $\left[xy-\sqrt{(1-x^2)(1-y^2)}, xy+\sqrt{(1-x^2)(1-y^2)}\right]$ shrinks down to $1$ and $z\to 1$. By the smoothness of $G$, we conclude that $\sum_{n=1}^\infty \frac{(-1)^n}{n}P_{2n}(x)P_{2n}(y)$ is continuous at $(1,1)$. 

\par Finally to show that \cref{eq:kernel_a_inproof} does not converge absolutely, consider the point $(x, y) = (1, 1)$. We have
\begin{equation*}
    \sum_{n=1}^\infty\left|\frac{(-1)^n}{n^2}\tilde{P}_{2n}(1)\tilde{P}_{2n}(1)\right| = \sum_{n=1}^\infty \frac{1}{n^2}\sqrt{2n+1/2}\cdot\sqrt{2n+1/2}  = \sum_{n=1}^\infty \frac{2n+\frac{1}{2}}{n^2}
\end{equation*}
which does not converge. 
\end{proof}

The kernel $K_{\text{abs}}$ tells us that the convergence of Mercer's expansion can be subtle for general continuous kernels.

\subsection{A Mercer's expansion that converges pointwise, but not uniformly}
We now prove that for any $[a,b]\subset \mathbb{R}$ there exists a symmetric indefinite kernel $K_{\text{uni}}:[a,b]\times [a,b]\rightarrow\mathbb{R}$ with a Mercer's expansion that converges pointwise, but not uniformly. This section contains a proof~\Cref{prop:Kuniform}. 

Without loss of generality, we assume that $[a,b]=[-1,1]$; otherwise, we can transplant the example below to have this domain. First, define $\tilde{U}_n(x) := \sqrt{2/\pi}(1-x^2)^{1/4} U_n(x)$, where $U_n$ is the degree $n$ Chebyshev polynomial of the second kind. Note that $\{\tilde{U}_n\}_{n\in\mathbb{N}}$ is an orthonormal set of polynomials with respect to the standard $L^2$ inner-product on $[-1, 1]$. Also, $\tilde{U}_n(-1) = \tilde{U}_n(1) = 0$ and $\tilde{U}_n$ is continuous. 

We now divide $[-1, 1]$ into infinite number of disjoint subintervals $I_1, I_2, \dots, $ defined by
\begin{equation*}
    I_n = \Big(-1+\frac{12}{\pi^2}\sum_{j=1}^{n-1}\frac{1}{j^2}, -1+\frac{12}{\pi^2}\sum_{j=1}^{n}\frac{1}{j^2} \Big),
\end{equation*}
which is designed so that $\text{length}(I_n) = \frac{12}{\pi^2n^2}$, while $\sum_{n=1}^\infty\text{length}(I_n)=2$. Using these subintervals, define $v_n$ for all $n\in\mathbb{N}$, as
\begin{equation}\label{eq:definitionvn}
    v_n(x) = \left\{\begin{array}{ll}\displaystyle -\frac{m\pi}{\sqrt{6}} \tilde{U}_{2m}( i_m(x)), & \text{ if }n = 2m-1\\
    \displaystyle \frac{m\pi}{\sqrt{6}} \tilde{U}_{2m+2}( i_m(x)), & \text{ if }n = 2m\end{array}\right.
\end{equation}
where $i_m:I_m\to [-1, 1]$ is a linear bijection (shift) from $I_m$ onto $[-1, 1]$. The functions $v_1, v_2, \dots$ are orthonormal on $L^2([-1,1])$ and $v_n$ vanishes outside of $I_m$. 

We now consider the kernel $K_\text{uni}:[-1, 1]\times [-1, 1]\to \mathbb{R}$ given by  
\begin{equation}\label{eq:kernel_u}
    K_{\text{uni}}(x, y) = \sum_{n=1}^\infty \frac{(-1)^n}{(\lceil\frac{n}{2}\rceil)^3} v_n(x)v_n(y).
\end{equation}
The kernel $K_{\text{uni}}$ is continuous, symmetric, and has an infinite number of positive and an infinite number of negative eigenvalues. Also since $\{v_n\}_{n\in\mathbb{N}}$ is an orthonormal set, \cref{eq:kernel_u} is a Mercer's expansion for $K_{\text{uni}}$. It turns out that this expansion does not converge uniformly to $K_{\text{uni}}$. 

\begin{remark}
    The singular values of $K_{\text{uni}}$ in~\cref{eq:kernel_u} are $\frac{1}{1^3}, \frac{1}{1^3}, \frac{1}{2^3}, \frac{1}{2^3}, \dots$, which have a decay rate of $\mathcal{O}(n^{-3})$ as $n\to\infty$. 
\end{remark}

To prove~\Cref{prop:Kuniform}, we introduce two useful lemmas about the functions $\tilde{U}_n$. 
\begin{lemma}[Uniform norm bound of $\tilde{U}_n$]\label{lem:Unbound}
For any $n\in\mathbb{N}$ we have 
\begin{equation*}
    2\pi^{-1}\sqrt{n} \leq \|\tilde{U}_n\|_\infty \leq \sqrt{4(n+1)\pi^{-1}},
\end{equation*} 
where $\|\tilde{U}_n\|_\infty$ is the absolute maximum value of $\tilde{U}_n(x)$ for $x\in[-1,1]$. 
\end{lemma}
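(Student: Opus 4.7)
The plan is to reduce everything to a trigonometric substitution. Writing $x = \cos\theta$ for $\theta \in [0,\pi]$, one has $(1-x^2)^{1/4} = \sqrt{\sin\theta}$ and, by the classical identity $U_n(\cos\theta) = \sin((n+1)\theta)/\sin\theta$, the function $\tilde U_n$ takes the simple form
\begin{equation*}
\tilde U_n(\cos\theta) \;=\; \sqrt{\tfrac{2}{\pi}}\,\frac{\sin((n+1)\theta)}{\sqrt{\sin\theta}}, \qquad \theta\in(0,\pi).
\end{equation*}
The boundary values at $\theta = 0,\pi$ are $\tilde U_n(\pm 1)=0$ and play no role. From this representation both bounds reduce to estimates of $|\sin((n+1)\theta)|/\sqrt{\sin\theta}$.

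For the \textbf{upper bound}, I would use the elementary inequality $|\sin((n+1)\theta)| \le \min\{1,\,(n+1)\sin\theta\}$, where the second bound follows from $\sin((n+1)\theta) = U_n(\cos\theta)\sin\theta$ together with the standard estimate $|U_n(\cos\theta)| \le n+1$ (or by induction using the sine addition formula). Setting $s := \sin\theta \in (0,1]$, the task is to maximize $g(s) = \min\{1,(n+1)s\}/\sqrt{s}$. On $(0,1/(n+1)]$ we have $g(s) = (n+1)\sqrt{s}$, which is increasing, and on $[1/(n+1),1]$ we have $g(s) = 1/\sqrt{s}$, which is decreasing. Both branches meet at $s = 1/(n+1)$ with value $\sqrt{n+1}$, so
\begin{equation*}
\|\tilde U_n\|_\infty \;\le\; \sqrt{\tfrac{2}{\pi}}\,\sqrt{n+1} \;\le\; \sqrt{\tfrac{4(n+1)}{\pi}}.
\end{equation*}

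For the \textbf{lower bound}, I would simply evaluate at the explicit point $\theta^{\star} = \pi/(2(n+1))$, chosen so that $\sin((n+1)\theta^{\star}) = \sin(\pi/2) = 1$. Combined with the standard inequality $\sin\theta^{\star} \le \theta^{\star} = \pi/(2(n+1))$, this gives
\begin{equation*}
\|\tilde U_n\|_\infty \;\ge\; |\tilde U_n(\cos\theta^{\star})| \;=\; \sqrt{\tfrac{2}{\pi}}\,\frac{1}{\sqrt{\sin\theta^{\star}}} \;\ge\; \sqrt{\tfrac{2}{\pi}}\cdot\sqrt{\tfrac{2(n+1)}{\pi}} \;=\; \frac{2\sqrt{n+1}}{\pi} \;\ge\; \frac{2\sqrt{n}}{\pi}.
\end{equation*}

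There is no serious obstacle here: once the trigonometric form of $\tilde U_n$ is written down, the upper bound is a one-variable calculus exercise and the lower bound is obtained by a single well-chosen evaluation. The only mild care needed is to justify passing to $\theta$-coordinates at the endpoints $x = \pm 1$, which is immediate since $\tilde U_n$ vanishes continuously there.
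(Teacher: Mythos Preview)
Your proof is correct and follows essentially the same approach as the paper: the trigonometric substitution $x=\cos\theta$, the same evaluation point $\theta^\star=\pi/(2(n+1))$ for the lower bound, and a case split around $\sin\theta \approx 1/(n+1)$ for the upper bound. Your upper-bound step, using $|\sin((n+1)\theta)|\le (n+1)\sin\theta$ directly via the standard bound $|U_n|\le n+1$, is in fact slightly cleaner than the paper's (which instead uses $\sin((n+1)\theta)\le (n+1)\theta$ together with $\sin\theta\ge\theta/2$) and even yields the sharper constant $\sqrt{2(n+1)/\pi}$ before relaxing to the stated $\sqrt{4(n+1)/\pi}$.
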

\begin{proof}
With the change of variables $x=\cos\theta$ for $\theta\in[0, \pi]$, we find that 
\begin{equation*}
    \sqrt{\frac{\pi}{2}}\tilde{U}_n(x) = \frac{\sin (n+1)\theta}{\sqrt{\sin{\theta}}}.
\end{equation*}
This means that we have 
\begin{equation*}
    \sqrt{\frac{\pi}{2}}\tilde{U}_n(x_n) = \frac{\sin{\frac{\pi}{2}}}{\sqrt{\sin{\frac{\pi}{2(n+1)}}}} > \sqrt{\frac{2}{\pi}(n+1)}> \sqrt{\frac{2n}{\pi}},
\end{equation*}
for $x_n=\cos\frac{\pi}{2(n+1)}$ using $\sin(x_n)< x_n$. This proves the lower bound on $\|\tilde{U}_n\|\infty$. 

\par To prove the upper bound, first consider $0\leq \theta<\frac{1}{n+1}$ to see that
\begin{equation*}
    \frac{\sin (n+1)\theta}{\sqrt{\sin{\theta}}} \leq \frac{(n+1)\theta}{\sqrt{\theta/2}} < \sqrt{2\theta}(n+1) < \sqrt{2(n+1)},
\end{equation*}
using $\frac{\theta}{2}\leq \sin\theta$ for $\theta\in[0, \pi/2]$. For $\frac{1}{n+1}\leq \theta \leq \frac{\pi}{2}$, we have
\begin{equation*}
    \frac{\sin (n+1)\theta}{\sqrt{\sin{\theta}}} \leq \frac{1}{\sqrt{\sin\theta}} \leq \sqrt{\frac{2}{\theta}} \leq \sqrt{2(n+1)}. 
\end{equation*}
Finally, the upper bound on $\|\tilde{U}\|_\infty$ holds since $|\tilde{U}_n(-x)| = |\tilde{U}_n(x)|$. 
\end{proof}

\Cref{lem:Unbound} is a weaker version of Bernstein-type bound for Chebyshev polynomials of the second kind. Recall that the Chebyshev polynomials of the second kind are the Jacobi polynomials with parameter $(\frac{1}{2}, \frac{1}{2})$. For Jacobi polynomials with integer parameters, a similar bound is known \citep{haagerup2014inequalities}, while it is discussed only numerically so far for Jacobi polynomials with noninteger parameters \citep[Remark 4.4]{koornwinder2018jacobi}.

\begin{lemma}[Sum of Consecutive $(-1)^n\tilde{U}_{2n}$]\label{lem:sumofU2n}
For any $n\in\mathbb{N}$, we have
\begin{equation*}
\left\|\tilde{U}_{2n}(x) - \tilde{U}_{2n+2}(x)\right\|_\infty \leq \sqrt{8\pi^{-1}}.
\end{equation*}
\end{lemma}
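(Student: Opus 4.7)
The plan is to follow the same substitution strategy used in the proof of \Cref{lem:Unbound} and reduce the bound to an elementary trigonometric identity. Specifically, I will substitute $x = \cos\theta$ with $\theta \in [0,\pi]$, which gives the closed-form expression
\begin{equation*}
\sqrt{\tfrac{\pi}{2}}\,\tilde{U}_n(\cos\theta) = \frac{\sin((n+1)\theta)}{\sqrt{\sin\theta}}.
\end{equation*}
Applied to $\tilde{U}_{2n} - \tilde{U}_{2n+2}$, this turns the difference into
\begin{equation*}
\sqrt{\tfrac{\pi}{2}}\bigl(\tilde{U}_{2n}(\cos\theta) - \tilde{U}_{2n+2}(\cos\theta)\bigr) = \frac{\sin((2n+1)\theta) - \sin((2n+3)\theta)}{\sqrt{\sin\theta}}.
\end{equation*}

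Next, I will apply the sum-to-product identity $\sin A - \sin B = 2\cos(\tfrac{A+B}{2})\sin(\tfrac{A-B}{2})$ to the numerator with $A = (2n+1)\theta$ and $B = (2n+3)\theta$. This yields $-2\cos((2n+2)\theta)\sin\theta$, and the key observation is that the $\sin\theta$ factor cancels exactly one copy of $\sin\theta$ in the denominator, leaving
\begin{equation*}
\sqrt{\tfrac{\pi}{2}}\bigl(\tilde{U}_{2n}(\cos\theta) - \tilde{U}_{2n+2}(\cos\theta)\bigr) = -2\cos((2n+2)\theta)\sqrt{\sin\theta}.
\end{equation*}
The cancellation is really the whole content of the lemma: unlike $\tilde{U}_n$ alone, whose uniform bound grows like $\sqrt{n}$ because of the $1/\sqrt{\sin\theta}$ singularity at the endpoints, the consecutive difference cancels the singularity and is globally bounded independently of $n$.

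Finally, I estimate trivially: $|\cos((2n+2)\theta)| \leq 1$ and $\sqrt{\sin\theta} \leq 1$ for $\theta \in [0,\pi]$, giving an absolute bound of $2$ on the right-hand side. Multiplying by $\sqrt{2/\pi}$ to undo the normalization yields
\begin{equation*}
\bigl|\tilde{U}_{2n}(x) - \tilde{U}_{2n+2}(x)\bigr| \leq 2\sqrt{\tfrac{2}{\pi}} = \sqrt{\tfrac{8}{\pi}},
\end{equation*}
uniformly in $x \in [-1,1]$. Since the entire proof rests on an identity followed by trivial estimates, I do not expect any genuine obstacle; the only thing to be careful about is verifying the arithmetic in the sum-to-product step so that the $\sin\theta$ in the numerator lines up correctly to cancel the denominator.
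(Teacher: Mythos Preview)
Your proof is correct and essentially identical to the paper's own argument: the paper also substitutes $x=\cos\theta$, obtains $\sqrt{\pi/2}\bigl(\tilde{U}_{2m}(x)-\tilde{U}_{2m+2}(x)\bigr) = -2\cos((2m+2)\theta)\sqrt{\sin\theta}$ via the same sum-to-product identity, and reads off the bound. Your write-up is slightly more detailed in spelling out the cancellation and the final constant, but the content is the same.
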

\begin{proof}
Using the trigonometric definition of $\tilde{U}_n$, we have
\begin{align*}
    \sqrt{\frac{\pi}{2}}\left(\tilde{U}_{2m}(x)-\tilde{U}_{2m+2}(x)\right) &= \frac{\sin((2m+1)\theta) - \sin((2m+3)\theta)}{\sqrt{\sin \theta}}\\
    &= -2\cos((2m+2)\theta)\sqrt{\sin \theta},
\end{align*}
which gives the desired inequality. 
\end{proof}
\Cref{lem:sumofU2n} shows us that if one adds two consecutive terms from $\{(-1)^n\tilde{U}_{2n}(x)\}_{n\in\mathbb{N}}$, the norm of the resulting function is bounded by a constant, whereas~\Cref{lem:Unbound} states that each term on its own has norm at least $\mathcal{O}(\sqrt{n})$. In other words, there will be significant pointwise cancellations happening everywhere in Mercer's expansion for $K_{\text{uni}}$ in~\cref{eq:kernel_u}. 

We are ready to prove~\Cref{prop:Kuniform} by giving an explicit example of symmetric indefinite kernel with pointwise convergent Mercer's expansion that does not converge uniformly. 
\begin{proof}[Proof of~\Cref{prop:Kuniform}]
First, notice that from the definition of $v_n$ and $K_{\text{uni}}(x, y)$, there are at most two nonzero terms in the series for each fixed $x, y\in[-1, 1]^2$. If $m$ is an integer such that $x\in I_m$, then $K_{\text{uni}}(x,y) = 0$ if $y\notin I_m$; otherwise, for $y\in I_m$, we find that
\begin{align}\nonumber
    K_{\text{uni}}(x, y) &= \frac{1}{m^3}(v_{2m-1}(x) v_{2m-1}(y) + v_{2m}(x)v_{2m}(y)) \\ \label{eq:Kastwoterms}
    &= \frac{1}{m^3} \frac{m^2\pi^2}{6} \left[\tilde{U}_{2m+2}(i_m(x)) \tilde{U}_{2m+2}(i_m(y)) - \tilde{U}_{2m}(i_m(x)) \tilde{U}_{2m}(i_m(y))\right],
\end{align}
which converges for every $x, y$. For any $x, y$ in $([-1, 1]\times [-1, 1])\backslash ([b, 1]\times [b, 1])$ with $b<1$ the expansion in~\cref{eq:kernel_u} is a finite sum, which converges uniformly and $K_{\text{uni}}$ is continuous on $[-1, 1]\times [-1, 1]$ except possibly at the point $(x, y) = (1, 1)$ as the functions $\{v_n\}_{n\in\mathbb{N}}$ are continuous. 

To prove continuity at the upper right corner $(1, 1)$, consider a small open set $B$ around $(1, 1)$. For any $x, y\in B$ suppose $x, y\in I_m$. Letting $x' = i_m(x), y' = i_m(y)$ and from \cref{eq:Kastwoterms} we have
\begin{align*}
    K_{\text{uni}}(x, y) = \frac{\pi^2}{6m} \left[\left(\tilde{U}_{2m+2}(x') \right.\right.&\left.-\tilde{U}_{2m}(x')\right)  \tilde{U}_{2m+2}(y') \\ 
    & + \left.\tilde{U}_{2m}(x')\left(\tilde{U}_{2m+2}(y') - \tilde{U}_{2m}(y')\right)\right].
\end{align*}
By~\Cref{lem:sumofU2n} the terms $\tilde{U}_{2m+2}(x')-\tilde{U}_{2m}(x')$ and $\tilde{U}_{2m+2}(y') - \tilde{U}_{2m}(y')$ are bounded above by $2\sqrt{\frac{2}{\pi}}$, and $\tilde{U}_{2m+2}$, $\tilde{U}_{2m}$ are both bounded above by $\sqrt{(8m+12)/\pi}$. Thus we obtain
\begin{equation*}
    |K_{\text{uni}}(x, y)| \leq \frac{\pi^2}{6m}\cdot 2\sqrt{\frac{2}{\pi}} \cdot 2 \sqrt{\frac{8m+12}{\pi}} < C m^{-1/2},
\end{equation*}
where $C$ does not depend on $m$. As $B$ gets smaller, we have $x, y\to 1$, which implies $m\to\infty$. Hence, limit of the value of $K_{\text{uni}}(x,y)$ as $(x, y)\to (1,1)$ is zero, which equal to $K_{\text{uni}}(1, 1)$, proving the continuity of $K_{\text{uni}}$ at $(1, 1)$. Altogether, we find that $K_{\text{uni}}$ is continuous on $[-1, 1]\times [-1, 1]$. 

For the final step, let us prove that the convergence of \cref{eq:Kdefinition} is not uniform. The norm of $(2m)$th term of the right-hand side of \cref{eq:kernel_u} is bounded below by
\begin{equation*}
    \frac{1}{m^3}\max_{x,y\in[-1,1]}\left|v_{2m}(x)v_{2m}(y)\right| > \frac{1}{m^3} \cdot \frac{m^2\pi^2}{6} \cdot \left(\frac{2}{\pi} \sqrt{2m}\right)^2 = \frac{4}{3}
\end{equation*}
using the lower bound of $\tilde{U}_{2m+2}$ obtained in~\Cref{lem:Unbound}. Since this constant lower bound holds for any $(2m)$th term, it can be deduced from the Cauchy condition that the series does not converge uniformly. 
\end{proof}

The following proposition gives an additional example of a continuous asymmetric kernel where its SVE converges pointwise but not uniformly. Once more, we use the functions $\{v_n\}$ defined in \cref{eq:definitionvn}. 

\begin{proposition}[Asymmetric kernel with a Mercer's expansion that does not converge uniformly]
Define $K_{as}:[-1, 1]\times[-1, 1]\to \mathbb{R}$ by
\begin{equation}\label{eq:Kdefinition}
    K_{\text{as}}(x, y) = \sum_{n=1}^\infty \frac{1}{(2\lceil \frac{n}{2}\rceil)^2} u_n(x) v_n(y),
\end{equation}
with $u_n = \tilde{U}_{2n}$ and $v_n$ defined in \cref{eq:definitionvn}. Then, $K_{\text{as}}$ is a continuous function which is well-defined for all $(x, y)\in[-1, 1]\times [-1, 1]$. Moreover, the right-hand side of \cref{eq:Kdefinition} is Mercer's expansion of $K_{\text{as}}$ which does not converge uniformly. 
\end{proposition}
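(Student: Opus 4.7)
The plan is to run the same three-phase argument used for \Cref{prop:Kuniform}: check orthonormality of both families to confirm the displayed series is a bona fide SVE, show that $K_{\text{as}}$ is pointwise well-defined and continuous on $[-1,1]\times[-1,1]$, and finally use the Cauchy criterion to rule out uniform convergence. Orthonormality is immediate: $\{u_n\}=\{\tilde{U}_{2n}\}_{n\in\mathbb{N}}$ inherits orthonormality as a subset of $\{\tilde{U}_n\}_{n\in\mathbb{N}}$, and $\{v_n\}$ was already shown to be orthonormal when introduced in~\cref{eq:definitionvn}. The singular values are then $1/(2\lceil n/2\rceil)^2$, appearing in pairs $1/4,1/4,1/16,1/16,\ldots$, which matches the form of a Mercer's expansion.

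For pointwise well-definedness, I would exploit the fact, inherited from the construction, that $v_n$ is supported in $I_{\lceil n/2\rceil}$ and the intervals $\{I_m\}$ are disjoint. Hence, for any fixed $y$, only the two indices $n\in\{2m-1,2m\}$ with $y\in I_m$ contribute, and writing $y'=i_m(y)$, the series collapses to
\begin{equation*}
K_{\text{as}}(x,y) \;=\; \frac{\pi}{4m\sqrt{6}}\Bigl[\tilde{U}_{4m}(x)\,\tilde{U}_{2m+2}(y') \;-\; \tilde{U}_{4m-2}(x)\,\tilde{U}_{2m}(y')\Bigr].
\end{equation*}
If $y$ sits at an interval endpoint or at $\pm 1$, every $v_n(y)$ vanishes so $K_{\text{as}}(x,y)=0$. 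Continuity on $[-1,1]\times I_m$ is then automatic as a finite sum of continuous functions, and continuity across interior endpoints of the $I_m$ follows from $\tilde{U}_n(\pm 1)=0$, which forces the one-sided limits from inside each $I_m$ to agree with zero.

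The main obstacle, as in the proof of \Cref{prop:Kuniform}, is joint continuity at the corner points $(x,1)$ (the case $(x,-1)$ is even easier since only $I_1$ is involved). For $y$ close to $1$, $y$ lies in $I_m$ with $m$ large, and the naive bounds from \Cref{lem:Unbound} alone only give $|K_{\text{as}}(x,y)|=\mathcal{O}(1)$. To extract decay I would use the identity $ab-cd = a(b-d)+(a-c)d$ on the displayed formula above, obtaining two terms, each containing exactly one difference of the form $\tilde{U}_{2k+2}-\tilde{U}_{2k}$. \Cref{lem:sumofU2n} bounds such differences uniformly by $\sqrt{8/\pi}$, while the unpaired factor is $\mathcal{O}(\sqrt{m})$ by the upper bound in \Cref{lem:Unbound}; combined with the prefactor $1/(4m)$ this gives $|K_{\text{as}}(x,y)|=\mathcal{O}(m^{-1/2})\to 0$ as $y\to 1$, matching the value $K_{\text{as}}(x,1)=0$.

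For the non-uniform convergence, I would apply the Cauchy criterion directly to the $(2m)$-th term. Because the maximum of $|u_{2m}(x)v_{2m}(y)|$ over $(x,y)\in[-1,1]^2$ factors as $\|\tilde{U}_{4m}\|_\infty\cdot\|v_{2m}\|_\infty$, the lower bound in \Cref{lem:Unbound} yields
\begin{equation*}
\frac{1}{(2m)^2}\|u_{2m}\|_\infty\|v_{2m}\|_\infty \;\geq\; \frac{1}{4m^2}\cdot\frac{2\sqrt{4m}}{\pi}\cdot\frac{m\pi}{\sqrt{6}}\cdot\frac{2\sqrt{2m+2}}{\pi} \;\geq\; c
\end{equation*}
for a positive constant $c$ independent of $m$. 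Consecutive partial sums therefore fail to be Cauchy in the uniform norm, completing the argument.
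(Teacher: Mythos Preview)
Your proposal is correct and follows essentially the same route the paper indicates (it explicitly says the proof is analogous to that of \Cref{prop:Kuniform}). The only structural difference from the $K_{\text{uni}}$ case, which you correctly identify, is that here the $x$-factors $u_n=\tilde{U}_{2n}$ are globally supported, so continuity must be checked along the whole edge $\{y=1\}$ rather than only at the single corner $(1,1)$; your uniform-in-$x$ bound $|K_{\text{as}}(x,y)|=\mathcal{O}(m^{-1/2})$ handles this exactly as needed.
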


We omit the proof as it is analogous to that of~\Cref{prop:Kuniform}. 

\section{On the convergence of Mercer's expansion for continuous kernels of uniform bounded variation}\label{sec:mainresultproof}

For any $[a,b],[c,d]\subset\mathbb{R}$ and a continuous kernel $K:[a,b]\times [c,d]\rightarrow\mathbb{R}$ with uniform bounded variation (see~\cref{eq:boundedVariation}), we prove that Mercer's expansion of $K$ converges unconditionally almost everywhere.

To show this, we first give the following lemma, which is a generalization of the Rademacher--Menchov Theorem. 
\begin{lemma}[\citet{kashin2005orthogonal}, Chapter VIII.2]\label{lem:ONSunconditional}
Let $\{\phi_n(x)\}_{n\in\mathbb{N}}$ be a set of orthonormal functions and $\{a_n\}_{n\in \mathbb{N}}$ be a set of coefficients. If $a_1, a_2, \dots$ satisfy $\sum_{n=1}^\infty |a_n|^{2-\epsilon} < \infty$ for some $\epsilon>0$ then the series $\sum_{n=1}^\infty a_n \phi_n(x)$ converges unconditionally almost everywhere. 
\end{lemma}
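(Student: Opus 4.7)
The plan is to reduce unconditional almost-everywhere convergence to a maximal function bound. Since a numerical series $\sum c_n$ converges unconditionally if and only if $\sup_{F \text{ finite}}\bigl|\sum_{n \in F} c_n\bigr| < \infty$, it suffices to prove that
\[
T(x) := \sup_{F \subset \mathbb{N},\,|F|<\infty}\Bigl|\sum_{n\in F} a_n \phi_n(x)\Bigr|
\]
is finite almost everywhere. I would aim for the stronger conclusion $T \in L^{2-\epsilon}(X,\mu)$.

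First, I would obtain pointwise control on the summands. By H\"older's inequality with exponents $\tfrac{2}{2-\epsilon}$ and $\tfrac{2}{\epsilon}$, together with $\|\phi_n\|_2 = 1$, one has $\int |\phi_n(x)|^{2-\epsilon}\,d\mu(x) \leq \mu(X)^{\epsilon/2}$. Summing and applying Tonelli gives
\[
\int \sum_n |a_n \phi_n(x)|^{2-\epsilon}\,d\mu(x) \leq \mu(X)^{\epsilon/2}\sum_n |a_n|^{2-\epsilon} < \infty,
\]
so $\{a_n \phi_n(x)\}_n \in \ell^{2-\epsilon}$ for almost every $x$. Second, I would randomize with independent Rademacher signs $\xi_n$ and invoke a Khintchine--Kahane bound on $\mathbb{E}_{\xi}\bigl|\sum_n \xi_n a_n \phi_n(x)\bigr|^{2-\epsilon}$ in terms of the square function $(\sum_n a_n^2 \phi_n(x)^2)^{(2-\epsilon)/2}$. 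The elementary subadditivity $(\sum s_n)^{(2-\epsilon)/2} \leq \sum s_n^{(2-\epsilon)/2}$ for nonnegative $s_n$ then dominates the integrand by $\sum_n |a_n|^{2-\epsilon} |\phi_n(x)|^{2-\epsilon}$, which we have already integrated in the first step. A L\'evy-type contraction argument finally transfers the resulting $L^{2-\epsilon}$ bound from the randomized sum to the maximal function $T$, giving $T < \infty$ almost everywhere.

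The main obstacle is this second step. For a general orthonormal system there is no pointwise control on $|\phi_n(x)|$ at all, so approaches that try to bypass symmetrization --- for example, a direct Cauchy--Schwarz bound such as $T(x) \leq (\sum |a_n|^{2-\epsilon})^{1/(2-\epsilon)} (\sum |\phi_n(x)|^{(2-\epsilon)/(1-\epsilon)})^{(1-\epsilon)/(2-\epsilon)}$ --- are doomed, since the second factor can be pointwise infinite on a set of full measure (as happens for the trigonometric system). Randomization together with the square function exploits the orthonormality at exactly the right strength, and this is the reason the hypothesis $\sum |a_n|^{2-\epsilon} < \infty$ (strictly stronger than the Parseval condition $\sum a_n^2 < \infty$, and in fact implying $\sum a_n^2 (\log n)^p < \infty$ for every $p$) is the correct one, going beyond the Rademacher--Menchov condition that only gives a.e.\ convergence in the natural order.
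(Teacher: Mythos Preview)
The paper does not prove this lemma; it is quoted from Kashin--Saakyan as a black box. So there is no ``paper's proof'' to compare against, but your proposal contains a fatal misreading of the conclusion that makes the whole strategy collapse.

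In the theory of general orthonormal series, ``$\sum a_n\phi_n$ converges unconditionally almost everywhere'' means: for every permutation $\pi$ of $\mathbb{N}$, the rearranged series $\sum_n a_{\pi(n)}\phi_{\pi(n)}(x)$ converges for a.e.\ $x$, where the exceptional null set \emph{may depend on} $\pi$. It does \emph{not} mean that for a.e.\ $x$ the scalar series converges unconditionally (equivalently, absolutely). Your maximal function $T(x)=\sup_{F}\bigl|\sum_{n\in F}a_n\phi_n(x)\bigr|$ is comparable to $\sum_n|a_n\phi_n(x)|$, so proving $T<\infty$ a.e.\ is exactly proving absolute convergence a.e., and that statement is \emph{false} under the hypothesis of the lemma. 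Take the trigonometric system $\phi_n(x)=(2\pi)^{-1/2}e^{inx}$ on $[0,2\pi]$ and $a_n=n^{-3/4}$: then $\sum|a_n|^{2-\epsilon}<\infty$ for every $\epsilon<2/3$, yet $\sum_n|a_n\phi_n(x)|=(2\pi)^{-1/2}\sum_n n^{-3/4}=\infty$ for \emph{every} $x$. No L\'evy or contraction argument can rescue this step: contraction controls $\mathbb{E}_\xi\bigl|\sum_{n\in F}\xi_n c_n\bigr|^p$ in terms of $\mathbb{E}_\xi\bigl|\sum_n\xi_n c_n\bigr|^p$, not the deterministic quantity $\bigl|\sum_{n\in F}c_n\bigr|$, and passing to the supremum of the latter over all $F$ is exactly $\sum|c_n|$ again.

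The actual argument (Orlicz's theorem, as presented in Kashin--Saakyan) fixes an arbitrary permutation $\pi$ and proves a.e.\ convergence of that single rearrangement. One groups indices into dyadic blocks $I_k$ according to the \emph{size} of $|a_j|$ (so $|I_k|\asymp 2^k$ and $|a_j|\le a^*_{2^{k-1}}$ on $I_k$, with $a^*$ the decreasing rearrangement), applies the Menchov--Rademacher maximal inequality $\bigl\|\max_{m}\bigl|\sum_{l\le m}c_l\psi_l\bigr|\bigr\|_2\le C(\log N)\bigl(\sum_l c_l^2\bigr)^{1/2}$ on each block in the order induced by $\pi$, and sums over $k$. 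The hypothesis $\sum|a_n|^{2-\epsilon}<\infty$ forces $a_n^*=O(n^{-1/(2-\epsilon)})$ with $1/(2-\epsilon)>1/2$, which makes the block contributions summable. The null set produced genuinely depends on $\pi$, which is precisely why your stronger target is unreachable.
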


\Cref{lem:ONSunconditional} allows us to prove that Mercer's expansion, i.e., 
\[
K(x,y) = \sum_{n=1}^\infty \sigma_nu_n(x)v_n(y),
\]
converges unconditionally almost everywhere for continuous kernels of uniform bounded variation. 

\begin{proof}[Proof of~\Cref{thm:unconditional}]
Let $\epsilon>0$. By H{\"o}lder's inequality the following inequality holds for any $\delta>0$: 
\begin{align}\nonumber
    \sum_{n=1}^\infty |\sigma_n u_n(x)|^{2-\epsilon} &= \sum_{n=1}^\infty|\sigma_n|^\delta\left(|\sigma_n|^{2-\epsilon-\delta}|u_n(x)|^{2-\epsilon}\right)\\ \label{eq:holderrhs}
    \leq &\left(\sum_{n=1}^\infty\left(|\sigma_n|^\delta\right)^\frac{2}{\epsilon}\right)^\frac{\epsilon}{2} \cdot \left(\sum_{n=1}^{\infty}\left(|\sigma_n|^{2-\epsilon-\delta}|u_n(x)|^{2-\epsilon}\right)^\frac{2}{2-\epsilon}\right)^\frac{2-\epsilon}{2},
\end{align}
The first term $(\sum_{n=1}^\infty |\sigma_n|^\frac{2\delta}{\epsilon})^\frac{\epsilon}{2}$ 
converges when 
\begin{equation}\label{eq:firstassumption}
    \alpha \cdot \frac{2\delta}{\epsilon} > 1
\end{equation}
while the second term can be written as
\begin{equation*}
    \left(\sum_{n=1}^{\infty}\left(|\sigma_n|^{2-\epsilon-\delta}|u_n(x)|^{2-\epsilon}\right)^\frac{2}{2-\epsilon}\right)^\frac{2-\epsilon}{2}= \left(\sum_{n=1}^\infty |\sigma_n|^\frac{4-2\epsilon-2\delta}{2-\epsilon}|u_n(x)|^2\right)^\frac{2-\epsilon}{2}.
\end{equation*}
Integrating the sum inside the parenthesis with respect to $x$, we find that 
\begin{equation*}
    \int_0^1\sum_{n=1}^\infty |\sigma_n|^\frac{4-2\epsilon-2\delta}{2-\epsilon}|u_n(x)|^2dx = \sum_{n=1}^\infty \left(\int_0^1|\sigma_n|^\frac{4-2\epsilon-2\delta}{2-\epsilon}|u_n(x)|^2 dx \right) = \sum_{n=1}^\infty|\sigma_n|^\frac{4-2\epsilon-2\delta}{2-\epsilon},
\end{equation*}
provided that the right-hand side converges, i.e., 
\begin{equation}\label{eq:secondassumption}
    \alpha\cdot \frac{4-2\epsilon-2\delta}{2-\epsilon} > 1.
\end{equation}
In this case, we have a monotonic series of functions given by  
\begin{equation*}
    f_n(x) = \sum_{n=1}^n |\sigma_n|^\frac{4-2\epsilon-2\delta}{2-\epsilon}|u_n(x)|^2,
\end{equation*}
which converges in the $L^1$ sense. Since we have $L^1$ convergence, we know that the second term of \cref{eq:holderrhs} converges pointwise for almost every $x$. 

From the assumption that we have $\alpha>\frac{1}{2}$, the two conditions in~\cref{eq:firstassumption} and \cref{eq:secondassumption} are both satisfied by selecting $\delta,\epsilon$ such that $\delta>\epsilon>0$ and 
\begin{equation*}
    1- \frac{1}{2\alpha} > \frac{\delta}{2-\epsilon}. 
\end{equation*}
(There is always a solution when $\alpha>\frac{1}{2}$ as one can take $\delta = 1 - \frac{1}{2\alpha}$ and $\epsilon=\min\{0.001, \delta/2\}$.) Thus, for any given $\alpha>\frac{1}{2}$, there is $\delta, \epsilon>0$ such that \cref{eq:holderrhs} converges for almost every $x$. From~\Cref{lem:ONSunconditional} we see that $\sum_{n=1}^\infty \sigma_n u_n(x)v_n(y)$ converges unconditionally for almost every $x$ and almost every $y$. 
\end{proof}

\end{document}